\newcommand\numberthis{\addtocounter{equation}{1}\tag{\theequation}}
\numberwithin{equation}{section}
\newcommand{\esssup}{ess\,sup}
\newenvironment{pfn}{\noindent{\sc Proof}}{\rule{2mm}{2mm}\medskip}
\DeclareMathOperator*{\essup}{ess\,sup}
\theoremstyle{plain}
\newtheorem{theorem}{Theorem}[section]
\newtheorem{lemma}[theorem]{Lemma}
\newtheorem{proposition}[theorem]{Proposition}
\newtheorem{definition}[theorem]{Definition}
\newtheorem{remark}[theorem]{Remark}
\renewcommand{\theequation}{\thesection.\arabic{equation}}
\begin{document}
\title{Optimal control for the infinity obstacle problem }

\author[H. Mawi,  and C. B. Ndiaye]{Henok Mawi \; and \;Cheikh Birahim Ndiaye}
\thanks{\today. \\
%MSC2010: 35J96, 78A05, 52A15, 35R30
The first author was partially supported by NSF grant HRD--1700236.}
\address{Department of Mathematics, Howard University, Washington, D.C. 20059}
\email{henok.mawi@howard.edu, cheikh.ndiaye@howard.edu}

%\author[]
%{\\}
\maketitle
%\section 
%\footnotetext{}
%\footnotetext{}
\begin{abstract}
In this note, we show that a natural optimal control problem for the \;$\infty$-obstacle problem admits an optimal control which is also an optimal state. Moreover, we show the convergence of the minimal value of an optimal control problem for the \;$p$-obstacle problem to the minimal value of our optimal control problem for the \;$\infty$-obstacle problem, as \;$p\to\infty$.
\end{abstract}

\section{Introduction}
The obstacle problem corresponding to an obstacle \;$f$ in
\begin{equation}\label{11}
 W^{1,2}_g (\Omega) =  \{ u \in W^{1,2} (\Omega) : \;\;u = g \,  \;\;\;\textrm{on} \,  \;\;\;\partial \Omega\}
\end{equation}
consists of minimizing the Dirichlet energy
\[
\int_{\Omega} |Du(x)|^2 \, dx
\]
over the set 
\begin{equation}\label{eq:trace}
\mathbb K^2_{f, g} = \{ u \in W^{1,2}_g (\Omega) : \;\;u(x) \geq f(x) \,\; \;\;\textrm{in} \ \, \Omega\}
\end{equation}
where  \;$\Omega \subset \mathbb R^n$\; is a bounded and smooth domain, \;$Du$\;is the gradient of \;$u$, \; and \;$g \in tr(W^{1, 2}(\Omega))$ with \;$tr$\; the trace operator. In \eqref{11}, the equality \;$u= g \,\; \textrm{on} \,\; \partial \Omega$ \;is in the sense of trace. 
This problem is used to model the equilibrium position of an elastic membrane whose boundary is held fixed at $g$ and is forced to remain above a given obstacle $f.$ It is known that the obstacle problem admits a unique solution $v \in \mathbb{K}^2_{f, g} $. That is, there is a unique  \;$v \in \mathbb{K}^2_{f, g} $ such that
\[
\int_{\Omega} |Dv(x)|^2 \, dx \leq \int_{\Omega} |Du(x)|^2 \, dx, \;\;\;\;\forall u \in  \mathbb{K}^2_{f, g}.
\]
\vspace{8pt}

In \cite{ALY98} Adams, Lenhart and Yong introduced an optimal control problem for the obstacle problem by studying the minimizer of the functional 
\[
J_2(\psi) = \frac{1}{2} \int_{\Omega} (|T_2(\psi) - z|^2 + |D\psi|^2 )\, dx.
\]
In the above variational problem, following the terminology in control theory \cite{L71}, $\psi$ is called the control variable and $T_2(\psi)$ is the corresponding state. The control $\psi$ lies in the space  $W^{1,2}_0(\Omega)$, the state  $T_2(\psi)$ is the unique solution for the obstacle problem corresponding to the obstacle $\psi$ and the profile $z$ is in $L^2(\Omega).$  The authors proved that there exists a unique minimizer $\bar \psi \in W^{1,2}_0(\Omega)$ of the functional $J_2$. Furthermore, they showed that \;$T_2(\bar \psi) = \bar\psi.$ 
\vspace{8pt}

Following suit, for \;$1 < p < \infty$, and $z \in L^p(\Omega)$, Lou in \cite{LH02} considered the variational problem of minimizing the functional 
\begin{equation}\label{eq:pp}
 \tag{$P_{p}$}
\bar J_p(\psi) = \frac{1}{p} \int_{\Omega} |T_p(\psi) - z|^p + |D\psi|^p\, dx
%\label{eqn:pcase}
\end{equation}
for 
$
\psi \in W^{1,p}_0(\Omega):=\{ u \in W^{1,p} (\Omega) : \;\;u=0 \;\;\; \textrm{on} \, \;\; \partial \Omega\}
$
and established that the problem admits a minimizer $\bar \psi.$  Here $T_p(\psi)$ is the unique solution for the $p-$obstacle problem with obstacle \;$\psi\in  W^{1,p}_0(\Omega)$, see \cite{ALS15} and references therein for discussions about the \;$p$-obstacle problem. We remind the reader that the $p-$obstacle problem with obstacle $f \in W^{1,p}_g (\Omega)$ refers to the problem of minimizing the $p-$Dirichlet energy
\[
\int_{\Omega} |Du(x)|^p \, dx 
\]
among all functions in the class
\begin{equation*}
\mathbb K^p_{f, g} = \{ u \in W^{1,p} (\Omega) : \;\;\;u \geq f \, \;\;\; \textrm{in} \ \,\;\; \Omega \, \;\;\; \textrm {and} \, \;\;u = g \, \;\;\; \textrm{on} \,  \;\;\;\partial \Omega\},
\end{equation*}
with \;$g\in tr(W^{1, p}(\Omega)$.
%Like the obstacle problem, the $p-$obstacle problem admits a unique solution in the space $\mathbb K^p_{f, g}$. 
It is further shown in \cite{LH02} that, as in the case of $p=2,$ $T_p(\bar \psi) = \bar \psi.$ 
%They also verifies that any minimizer \;$\psi_{min}$\; of \;$J_p$\; verifies \;$T_p(\psi_{min})=\psi_{min}$
\vspace{8pt}

For the boundary data \;$g\in Lip(\partial \Omega)$, letting \;$p \to \infty$, one obtains a limiting variational problem of $L^{\infty}$-type which is referred in the literature as the infinity obstacle problem or $\infty$-obstacle problem (see \cite{RTU15}) . That is, given an obstacle \;$f \in W^{1, \infty}_g(\Omega)$ one considers the minimization problem: 
\begin{equation}\label{inftyobstacelvariational}
\text{Finding }\;\;u_{\infty}\in  \mathbb K^{\infty}_{f, g}:\;\;\;  ||Du_{\infty}||_\infty=\inf_{u \in \mathbb K^{\infty}_{f, g}}  ||Du||_\infty,
\end{equation}
where 
\begin{equation*}\label{eq:defkti}
\mathbb K^{\infty}_{f, g} = \{ u \in W^{1,\infty}(\Omega) : \;\;v \geq f\;\;\; \textrm{in} \;\,\;\; \Omega \;\;\;\, u = g \,  \;\;\;\textrm{on} \, \;\;\; \partial \Omega\}, \;\;\text{and} \;\;\;||\cdot||_{\infty}:=\esssup\;|\cdot |.
\end{equation*}
It is established in \cite{RTU15} that the minimization problem \eqref{inftyobstacelvariational} has a solution \;
\begin{equation}\label{c1}
u_{\infty}:=u_{\infty}(f)\in \mathbb K^{\infty}_{f, g}
\end{equation}
 which verififies
\begin{equation}\label{infinityp}
-\Delta_{\infty}u_{\infty}\geq 0 \;\;\; \text{in}\;\;\Omega\;\;\; \text{in a weak sense }.
\end{equation}
More importantly, the authors in \cite{RTU15} characterize \;$u_{\infty}$\; as the smallest infinity superharmonic function on $\Omega$ that is larger than the obstacle $f$ and equals $g$ on the boundary. Thus  for  a fixed \;$F\in Lip(\partial \Omega)$, this generates an obstacle to solution operator  $$T_{\infty}: W^{1,\infty}_F(\Omega)\longrightarrow W^{1,\infty}_F(\Omega)$$ defined by
\begin{equation}\label{eq:definitionti}
T_{\infty}(f):=u_{\infty}(f)\in W^{1,\infty}_F(\Omega),\;\;\;\;f \in W^{1,\infty}_F(\Omega),
\end{equation}
% \;$\longrightarrow  $
 where $$W^{1,\infty}_F(\Omega):= \{ u \in W^{1,\infty}(\Omega) : u = F\,  \;\;\;\textrm{on} \, \;\;\; \partial \Omega\}.$$

%The infinity obstacle problem deals with let $\psi \in W^{1,\infty}_g (\Omega)$ where as usual
%\[
% W^{1,\infty}_g (\Omega) =  \{ u \in W^{1,\infty} (\Omega) : u(x) = g(x) \,  \textrm{on} \,  \partial \Omega\}.
%\]
\vspace{10pt}

In this note, we consider a natural optimal control problem for the infinity obstacle problem. More precisely, for \;$F\in Lip (\partial \Omega)$\; and  for  \;$z \in L^{\infty}(\Omega)$\; fixed,  we introduce the functional
\begin{align*}
J_{\infty} (\psi) =  \max\{||T_{\infty}(\psi) - z||_{\infty}  , \, ||D\psi||_{\infty} \},\;\;\;\psi\in W^{1, \infty}_F(\Omega)
\end{align*}
and study the problem of existence of \;$\psi_{\infty} \in W^{1,\infty}_F (\Omega)$\; such that:

\begin{align}
\tag{$P_{\infty}$}
J_{\infty} (\psi_{\infty}) \leq  J_{\infty} (\psi) ,\qquad\;\forall  \quad \psi  \in  W^{1,\infty}_F (\Omega).
\label{eq:inftycase}
\end{align}
%From now on, we will assume that \;$g \in C(\bar{\Omega}) \cap W^{1,\infty} (\Omega)$. 
In deference to optimal control theory, a  function \;$\psi_{\infty}$\; satisfying \eqref{eq:inftycase} is called an \emph {optimal control} and the state \;$T_{\infty}(\psi_{\infty})$\; is called an\emph{ optimal state.}  
\vspace{10pt}

Several variants of control problems where the control variable is the obstacle have been studied by different authors since the first of such works appeared in \cite{ALY98}. The literature is vast, but to mention a few, in \cite{AL03} the authors studied a generalization of \cite{ALY98} by adding a source term. In \cite{AL02} a similar problem is studied when the state is a solution to a parabolic variational inequality. In \cite{L01} the author studied regularity of the optimal state obtained in \cite{ALY98}. When the state is governed by a bilateral variational inequality, results are obtained in \cite{BL04}, \cite {QC00},  \cite{CY05} and \cite{CCT05}. Optimal control for higher order obstacle problems appears in \cite{AHL10} and \cite{GN17}. Related works where the control variable is the obstacle are also studied in \cite{DM15, SM12} and the references therein.
\vspace{10pt}

In this note, we prove that the optimal control problem \eqref{eq:inftycase} associated to \;$J_{\infty}$\; is solvable. Precisely we show the following result:
\begin{theorem}\label{eq:main}
Assuming that \;$\Omega \subset \mathbb R^n$\; is a bounded and smooth domain, $F\in Lip (\partial \Omega )$, and  \;$z \in L^{\infty}(\Omega)$, \;$J_{\infty}$\; admits an optimal control \;$u_{\infty}\in W^{1, \infty}_F(\Omega)$\; which is also an optimal state, i.e\; $$u_{\infty}=T_{\infty}(u_{\infty}).$$
\end{theorem}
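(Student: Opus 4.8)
The plan is to mimic the strategy of Adams–Lenhart–Yong and Lou, but with the extra care needed because $J_\infty$ is neither strictly convex nor reflexive. The key structural fact I would exploit is the one recorded after \eqref{infinityp}: $T_\infty(f)$ is the \emph{smallest} infinity superharmonic function on $\Omega$ lying above $f$ with boundary data $F$. In particular $T_\infty(f)\ge f$ always, and $T_\infty$ is monotone and idempotent: $T_\infty(T_\infty(f))=T_\infty(f)$, since $T_\infty(f)$ is already infinity superharmonic, above the obstacle $f$, hence above itself, and is its own smallest such majorant. This idempotency is the engine behind the conclusion $u_\infty=T_\infty(u_\infty)$.

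First I would establish existence of a minimizer of $J_\infty$ by the direct method. Take a minimizing sequence $\psi_k\in W^{1,\infty}_F(\Omega)$; since $J_\infty(\psi_k)$ is bounded, $\|D\psi_k\|_\infty$ is bounded, and together with the fixed boundary trace $F\in \mathrm{Lip}(\partial\Omega)$ this bounds $\|\psi_k\|_{W^{1,\infty}(\Omega)}$ (here I use that $\Omega$ is bounded and smooth, so a function in $W^{1,\infty}_F$ is Lipschitz with controlled sup norm). By Arzelà–Ascoli, a subsequence converges uniformly to some $\psi_\infty$, with $D\psi_k\rightharpoonup^* D\psi_\infty$ in $L^\infty$; weak-$*$ lower semicontinuity of $\|\cdot\|_\infty$ gives $\|D\psi_\infty\|_\infty\le\liminf\|D\psi_k\|_\infty$. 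The delicate point is continuity of $T_\infty$ along this sequence: I would invoke the stability results for the $\infty$-obstacle problem from \cite{RTU15} (or prove directly, using the variational characterization \eqref{inftyobstacelvariational} plus uniform convergence of the obstacles) that $T_\infty(\psi_k)\to T_\infty(\psi_\infty)$ uniformly, so that $\|T_\infty(\psi_\infty)-z\|_\infty\le\liminf\|T_\infty(\psi_k)-z\|_\infty$. Combining, $J_\infty(\psi_\infty)\le\liminf J_\infty(\psi_k)=\inf J_\infty$, so $\psi_\infty$ is an optimal control.

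Next, the "control equals state" claim. Set $u_\infty:=T_\infty(\psi_\infty)$; I claim $u_\infty$ is itself optimal and is a fixed point. Since $T_\infty(\psi_\infty)\ge\psi_\infty$ pointwise and both share boundary data $F$, one expects $\|D u_\infty\|_\infty\le\|D\psi_\infty\|_\infty$: indeed $u_\infty$ solves the minimization \eqref{inftyobstacelvariational} for the obstacle $\psi_\infty$, and $\psi_\infty$ itself is a competitor in $\mathbb K^\infty_{\psi_\infty,F}$, so $\|Du_\infty\|_\infty=\inf_{u\in\mathbb K^\infty_{\psi_\infty,F}}\|Du\|_\infty\le\|D\psi_\infty\|_\infty$. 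By idempotency $T_\infty(u_\infty)=T_\infty(T_\infty(\psi_\infty))=T_\infty(\psi_\infty)=u_\infty$, so the state term is unchanged: $\|T_\infty(u_\infty)-z\|_\infty=\|T_\infty(\psi_\infty)-z\|_\infty$. Hence
\[
J_\infty(u_\infty)=\max\{\|T_\infty(\psi_\infty)-z\|_\infty,\ \|Du_\infty\|_\infty\}\le\max\{\|T_\infty(\psi_\infty)-z\|_\infty,\ \|D\psi_\infty\|_\infty\}=J_\infty(\psi_\infty),
\]
and since $\psi_\infty$ is optimal, equality holds and $u_\infty$ is also an optimal control. Then relabel: $u_\infty$ is an optimal control with $u_\infty=T_\infty(u_\infty)$, which is the assertion.

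The main obstacle I anticipate is the continuity/stability of the operator $T_\infty$ under uniform convergence of obstacles — this is where the nonlinearity and the $L^\infty$ (rather than $L^p$) framework bite, since one cannot simply pass to limits in a weak Euler–Lagrange equation as in the $p<\infty$ case. I would handle it by the two-sided comparison: given $\|\psi_k-\psi_\infty\|_\infty\le\varepsilon_k\to0$, the function $T_\infty(\psi_\infty)+\varepsilon_k$ is infinity superharmonic (infinity superharmonicity is invariant under adding constants) with boundary data $F+\varepsilon_k$ and lies above $\psi_\infty+\varepsilon_k\ge\psi_k$, giving one inequality after correcting the boundary term; symmetrically for the other, yielding $\|T_\infty(\psi_k)-T_\infty(\psi_\infty)\|_\infty\to0$. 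A secondary point worth a remark is that $J_\infty$ need not have a \emph{unique} minimizer, so the theorem only claims existence of an optimal control that is a fixed point, not uniqueness — consistent with the statement.
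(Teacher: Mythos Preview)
Your argument is correct, but it is genuinely different from the paper's. The paper does \emph{not} run the direct method on $J_\infty$. Instead it uses the $p$-approximation: for each $1<p<\infty$ it takes the optimal control $\psi_p$ of $J_p$ (known from \cite{ALY98,LH02} to satisfy $T_p(\psi_p)=\psi_p$), shows the family $(\psi_p)$ is bounded in every $W^{1,q}$, extracts a limit $\psi_\infty$, and then uses the viscosity stability Lemma~\ref{lem:solnofpobstacleconvergetoinftyobstacel} together with $T_p(\psi_p)=\psi_p$ to conclude that $\psi_\infty$ is infinity superharmonic, hence a fixed point of $T_\infty$ by Lemma~\ref{eq:inftc}. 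Optimality of $\psi_\infty$ is then obtained by comparing $J_p(\psi_p)$ with $H_p(\eta)=\max\{\|T_p(\eta)-z\|_\infty,\|D\eta\|_\infty\}$ for arbitrary $\eta\in W^{1,\infty}_F(\Omega)$ and passing $p\to\infty$, then $q\to\infty$.

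What the two routes buy: your direct-method argument is more self-contained (it does not import Lou's $p<\infty$ existence and the identity $T_p(\psi_p)=\psi_p$), and the ``replace $\psi_\infty$ by $T_\infty(\psi_\infty)$ via idempotency'' step is an elegant way to force the control-equals-state conclusion that the paper never needs, since in their scheme $\psi_p=T_p(\psi_p)$ passes straight through the limit. On the other hand, the paper's $p$-approximation sets up Theorem~\ref{eq:convminima} for free, and it sidesteps proving continuity of $T_\infty$ under uniform convergence of obstacles. That continuity is exactly the point you flag as the main obstacle; your sketch is essentially right, but note that $T_\infty(\psi_\infty)+\varepsilon_k$ has boundary data $F+\varepsilon_k$, not $F$, so Lemma~\ref{eq:infinfhar} does not apply verbatim --- you need one extra comparison step (on the non-contact set $\{T_\infty(\psi_k)>\psi_k\}$ the function $T_\infty(\psi_k)$ is infinity harmonic, and $T_\infty(\psi_\infty)+\varepsilon_k$ dominates it on the boundary of that set, giving $T_\infty(\psi_\infty)+\varepsilon_k\ge T_\infty(\psi_k)$ throughout). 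With that patch your proof goes through.
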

\vspace{10pt}

Using also arguments similar to the ones used in the proof of Theorem \ref{eq:main}, we show the convergence of the minimal value of an optimal control problem associated to \;$\bar J_p$\; to the minimal value of the optimal control problem corresponding to \;$J_{\infty}$\; as \;$p$ \;tends to infinity. Indeed we prove the following result:
\begin{theorem}\label{eq:convminima}
Let \;$\Omega \subset \mathbb R^n$\; be a bounded and smooth domain, $F\in Lip (\partial \Omega )$, and  \;$z \in L^{\infty}(\Omega)$. Then setting
\[
J_p= (p\bar J_p)^{\frac{1}{p}}, \;\;C_p = \min_{\psi \in W^{1, p}_F(\Omega)}J_p(\psi)\;\;\text{for}\;\;\;1<p<\infty,\;\;
\text{and}\;\;\;\;\;C_{\infty} = \min_{\psi \in W^{1,\infty}_F(\Omega)}J_{\infty}(\psi),
\]
where \;$\bar J_p$\; is as in \eqref{eq:pp}, we have \[
\lim_{p \to \infty}C_p =C_{\infty}
\]

\end{theorem}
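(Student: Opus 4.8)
The plan is to prove the two inequalities $\limsup_{p\to\infty} C_p \le C_\infty$ and $\liminf_{p\to\infty} C_p \ge C_\infty$ separately, following the standard scheme for $L^p\to L^\infty$ variational limits and the estimates already implicit in the proof of Theorem~\ref{eq:main}.

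\medskip
\noindent\textbf{Upper bound.} For the $\limsup$ inequality, I would use the optimal control $u_\infty = T_\infty(u_\infty)$ produced by Theorem~\ref{eq:main} as a competitor for $J_p$. Since $u_\infty \in W^{1,\infty}_F(\Omega) \subset W^{1,p}_F(\Omega)$ for every finite $p$, minimality gives $C_p \le J_p(u_\infty) = \left( \int_\Omega |T_p(u_\infty) - z|^p + |Du_\infty|^p \, dx \right)^{1/p}$. The term $\left(\int_\Omega |Du_\infty|^p\right)^{1/p} \to \|Du_\infty\|_\infty$ as $p\to\infty$ by the standard fact that $L^p$ norms on a finite measure space converge to the $L^\infty$ norm (for the $\le$ direction it actually suffices that $|\Omega|^{1/p}\|Du_\infty\|_\infty \to \|Du_\infty\|_\infty$). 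For the first term I need $T_p(u_\infty) \to T_\infty(u_\infty)$ in a suitable sense; this is precisely the convergence of the $p$-obstacle solutions to the $\infty$-obstacle solution (the content of \cite{RTU15}, and the kind of convergence already exploited in Theorem~\ref{eq:main}). Combining, $\limsup_{p\to\infty} J_p(u_\infty) \le \max\{\|T_\infty(u_\infty)-z\|_\infty, \|Du_\infty\|_\infty\} = J_\infty(u_\infty) = C_\infty$, where the last equality uses that $u_\infty$ is the minimizer of $J_\infty$. Hence $\limsup_{p\to\infty} C_p \le C_\infty$.

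\medskip
\noindent\textbf{Lower bound.} For the $\liminf$ inequality, let $\psi_p \in W^{1,p}_F(\Omega)$ be a minimizer of $J_p$, so $J_p(\psi_p) = C_p$. From the $\limsup$ bound, $C_p$ is bounded above for $p$ large, hence $\|D\psi_p\|_{L^p(\Omega)} \le C_p$ is uniformly bounded; by the usual argument (for any fixed $q$, $\|D\psi_p\|_{L^q} \le |\Omega|^{1/q - 1/p}\|D\psi_p\|_{L^p}$ once $p\ge q$) together with the Poincaré inequality and the fixed boundary datum $F$, the family $\{\psi_p\}$ is bounded in $W^{1,q}_F(\Omega)$ for every $q<\infty$. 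A diagonal extraction yields a subsequence $\psi_{p_k}$ converging weakly in $W^{1,q}$ for all $q$, and in fact uniformly (by Morrey/Arzelà–Ascoli, using a uniform Lipschitz-type bound), to some $\psi_\ast \in W^{1,\infty}_F(\Omega)$ with $\|D\psi_\ast\|_\infty \le \liminf_k C_{p_k}$ by lower semicontinuity of the $L^q$ norm followed by $q\to\infty$. One then needs the continuity/stability of the obstacle-to-solution operators: $T_{p_k}(\psi_{p_k}) \to T_\infty(\psi_\ast)$ in an appropriate topology. This gives $\|T_\infty(\psi_\ast) - z\|_\infty \le \liminf_k \|T_{p_k}(\psi_{p_k}) - z\|_{L^q} \le \liminf_k C_{p_k}$ for each $q$, then let $q\to\infty$. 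Therefore $C_\infty \le J_\infty(\psi_\ast) = \max\{\|T_\infty(\psi_\ast)-z\|_\infty, \|D\psi_\ast\|_\infty\} \le \liminf_k C_{p_k}$. Since this holds for every convergent subsequence, $C_\infty \le \liminf_{p\to\infty} C_p$, and combined with the upper bound we conclude $\lim_{p\to\infty} C_p = C_\infty$.

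\medskip
\noindent\textbf{Main obstacle.} The delicate point is the joint convergence $T_{p_k}(\psi_{p_k}) \to T_\infty(\psi_\ast)$ where \emph{both} the operator index and the obstacle vary simultaneously. The convergence $T_p(f)\to T_\infty(f)$ for a \emph{fixed} obstacle is available from \cite{RTU15}; what is needed here is a uniform/stable version allowing the obstacles $\psi_{p_k}$ to move. I expect this to follow by combining (i) the comparison/monotonicity structure of the obstacle problem (the solution operator is order preserving in the obstacle), (ii) the uniform closeness $\|\psi_{p_k} - \psi_\ast\|_\infty \to 0$ from the compactness step, and (iii) uniform Lipschitz or Hölder estimates on $T_{p_k}(\psi_{p_k})$ independent of $k$ — exactly the estimates already developed in the proof of Theorem~\ref{eq:main}. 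Sandwiching $T_{p_k}(\psi_{p_k})$ between $T_{p_k}(\psi_\ast - \varepsilon_k)$ and $T_{p_k}(\psi_\ast + \varepsilon_k)$ with $\varepsilon_k = \|\psi_{p_k}-\psi_\ast\|_\infty \to 0$, and passing to the limit using the fixed-obstacle convergence, should close this gap.
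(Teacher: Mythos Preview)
Your overall architecture---$\limsup$ via a fixed $W^{1,\infty}$ competitor, $\liminf$ via compactness of the minimizers $\psi_p$---is exactly the paper's. The difference, and the real gap in your argument, lies in the step you yourself flag as the ``main obstacle'': the joint convergence $T_{p_k}(\psi_{p_k})\to T_\infty(\psi_\ast)$.

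The paper does \emph{not} resolve this by any sandwiching or stability argument. Instead it uses the key structural fact, imported from \cite{LH02}, that the optimal control for $J_p$ is a fixed point of $T_p$: $T_p(\psi_p)=\psi_p$. This collapses the joint convergence to the already-established convergence $\psi_p\to\psi_\infty$; then Lemma~\ref{lem:solnofpobstacleconvergetoinftyobstacel} (stability of viscosity solutions) shows $\psi_\infty$ is $\infty$-superharmonic, and Lemma~\ref{eq:inftc} gives $T_\infty(\psi_\infty)=\psi_\infty$. So $T_p(\psi_p)\to T_\infty(\psi_\infty)$ falls out for free, and the paper never needs continuity of $T_\infty$ in the obstacle.

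Your proposed sandwiching has a concrete failure: the shifted obstacles $\psi_\ast\pm\varepsilon$ have boundary trace $F\pm\varepsilon\neq F$, so $T_p(\psi_\ast+\varepsilon)$ with boundary datum $F$ is not even defined (the admissible set $\mathbb K^p_{\psi_\ast+\varepsilon,F}$ is empty near $\partial\Omega$). More fundamentally, even if you repair the comparison, passing to the limit via Lemma~\ref{lem:solnofpobstacleconvergetoinftyobstacel} only yields that any subsequential limit $w_\infty$ of $T_{p_k}(\psi_{p_k})$ is a viscosity solution of the $\infty$-obstacle system with obstacle $\psi_\ast$; by Lemma~\ref{eq:infinfhar} this gives $w_\infty\geq T_\infty(\psi_\ast)$, not equality, since uniqueness for the $\infty$-obstacle problem is open. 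Without $w_\infty=T_\infty(\psi_\ast)$ you cannot bound $\|T_\infty(\psi_\ast)-z\|_\infty$ by $\liminf C_{p_k}$ for arbitrary $z$. The identity $T_p(\psi_p)=\psi_p$ is precisely what circumvents this uniqueness issue.
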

\vspace{10pt}
%Since the proof of Theorem \ref{eq:convminima} use arguments similar to the ones of Theorem \ref{eq:main}, we discuss only the proof of Theorem \ref{eq:main}. To prove Theorem \ref{eq:main}, 
In the proofs of the above results, we use the \;$p$-approximation technique as in the study of the \;$\infty$-obstacle problem combined with the classical methods of weak convergence in Calculus of Variations. As in the study of the \;$\infty$-obstacle problem, here also the key analytical ingredients are the \;$L^q$-characterization of \;$L^{\infty}$\; and H\"older's inequality. The difficulty arises from the  the fact that the unicity question for the $\infty$-obstacle problem is still an open problem to the best of our knowledge.   To overcome the latter issue, we make use of the characterization of the solution of the \;$\infty$-obstacle problem by Rossi-Teixeira-Urbano \cite{RTU15}.
%\begin{remark}\label{eq:directrmethod}
%We would like to point out that by using the Direct Methods in Calculus of Variations, we show  also existence of an optimal control for $J_{\infty}$. However, with this method we we do not know if this solution is an optimal state too. Moreover, we do not know t%\end{remark}
\section {Preliminaries}
One of the  most popular way of approaching problems related to minimizing a functional of \;$L^{\infty}$-type is to follow the idea first introduced by Aronsson in \cite{GA65} and which involves interpreting an\;$L^{\infty}$-type minimization problem as a limit when \;$p \to \infty$\; of an \;$L^{p}$-type minimization problem. In this note, this \;$p$-approximation technique will be used to show existence of an optimal control for  \;$J_{\infty}$. In order to prepare for our use of the \;$p$-approximation technique, we are going to start this section by discussing some related \;$L^p$-type variational problems. 
\vspace{10pt}

Let \;$\Omega \subset \mathbb R^n$\; be a bounded and smooth domain and \;$g \in Lip(\partial\Omega).$  Moreover let \;$\psi \in W^{1,\infty}_g(\Omega)$\; be fixed  and \;$1<p<\infty$. Then as described earlier the \;$p$-obstacle problem with obstacle \;$\psi$\;corresponds to finding a minimizer of the functional
\begin{equation}\label{pobstaclevariationalform}
I_p(v) = \int_{\Omega} |Dv(x)|^p dx 
\end{equation}
over the space \;\;$\mathbb{K}^p_{\psi, g} = \{ v \in W^{1,p}(\Omega) : \;v \geq \psi, \;\;\; \textrm {and} \, \;\;\;v= g \,\;\;\;  \textrm{on} \,\;\;\;  \partial \Omega\}$.  The energy integral (\ref{pobstaclevariationalform}) admits a unique minimizer \;$u_p\; \in \;\mathbb{K}^p_{\psi, g} .$ The minimizer \;$u_p$\;is not only \;$p$-superharmonic, i.e \;$\Delta_p u_p \leq 0$,  but is also a weak solution to the following system
\begin{equation}\label{pobstacleproblem}
\begin{cases}
-\Delta_p u \geq  0  & \quad \quad \textrm{in} \quad \Omega\\
-\Delta_p u \, (u-\psi) =  0  & \quad \quad \textrm{in} \quad \Omega\\
u  \geq \psi &\quad \quad  \textrm{in} \quad \Omega
\end{cases}
\end{equation}
where $\Delta_p$ is the $p$-Laplace operator given by
\[\Delta_p u  : =div ( |Du|^{p-2} Du).\]
Moreover, it is known that the \;$p$-obstacle problem is equivalent to the system \eqref{pobstacleproblem} (see \cite{L71} or \cite{L15}) and hence we will refer to \eqref{pobstacleproblem} as the \;$p$-obstacle problem as well. On the other hand, by the equivalence of weak and viscosity solutions established in \cite{L15} (and \cite{JJ12} )\;$u_p$\;  is also a viscosity solution of \eqref{pobstacleproblem} according to the following definition.
\begin{definition} A function $u \in C(\Omega)$ is said to be a viscosity subsolution (supersolution) to %to (\ref{NLPDE})  if 
\begin{equation}
\begin{aligned}\label{NLPDE}
F(x, u, Du, D^2 u) & = 0 \quad \textrm{in}  \, \quad \Omega\\
u & = 0  \quad \textrm{in}  \, \quad \partial \Omega
\end{aligned}
\end{equation}
if for every $\phi \in C^2(\Omega)$ and $x_0 \in \Omega$ whenever $\phi-u$ has a minimum  (resp. maximum) in a neighborhood of \;$x_0$\; in \;$\Omega$\; we have:
\[
F(x, u, D\phi, D^2 \phi)  \leq 0 \quad (\textrm{resp. } \quad \geq 0).
\] 
The function\; $u$\; is called a viscosity solution of \eqref{NLPDE} in \;$\Omega$\; if \;$u$\; is both viscosity subsolution and viscosity supersolution of \eqref{NLPDE} in \;$\Omega.$
%\end{equation}
\end{definition}
\vspace{10pt}

The asymptotic behavior of the sequence  of minimizers \;$(u_p)_{p>1}$\; as \;$p$\; tends to infinity has been investigated in \cite{RTU15}.  In fact, in \cite{RTU15}, it is established that  for a fixed \;$\psi\in W^{1, \infty}_g(\Omega)$, there exists \;$u_{\infty}=u_{\infty}(\psi)\in \mathbb{K}^{\infty}_{\psi, g} = \{ v \in W^{1,\infty}_g(\Omega) : v \geq \psi\}$ \;such that \;$u_p \to u_\infty$\; locally uniformly in $\bar\Omega$, and that for every \;$q\geq 1$, \; $u_p$\; converges to \;$u_{\infty}$\; weakly in \;$W^{1,q}(\Omega).$ Furthermore,  $u_\infty$\; is a solution to the \;$\infty$-obstacle problem
\begin{equation}\label{inftyobstacelvariationalform}
\min_{v \in \mathbb{K}^{\infty}_{\psi, g}}  ||Dv||_\infty
\end{equation}
%where 
%\[
%|Dv|_\infty := \esssup |Dv|.
%\]
For  \;$\Omega$  convex (see \cite{ACJ04}), the variational problem \eqref{inftyobstacelvariationalform} is equivalent to the minimization problem
\begin{equation*}\label{inftyobstacelvariationalform1}
\min_{v \in \mathbb{K}^{\infty}_{\psi, g}}  \mathcal{L}(v),
\end{equation*}
%where $K_{\psi, g} = \{ v \in W^{1,\infty}(\Omega) : v \geq \psi, \, v(x) = g(x) \,  \textrm{on} \,  \partial \Omega\}$ 
%and 
where
\[
\mathcal{L}(v) = \inf_{(x, y)\in \Omega^2, \;x \neq y}\dfrac{|v(x) - v(y)|}{|x - y|}.
\]
Moreover, in \cite{RTU15}, it is show that \; $u_\infty$\; is a viscosity solution to the following system.  
\begin{equation*}\label{obstacleproblemforinftylaplacian}
\begin{cases}
-\Delta_ \infty u \geq  0  &\quad \quad \textrm{in} \quad \Omega\\
-\Delta_ \infty u \, ( u-\psi)= 0  &\quad \quad \textrm{in} \quad \Omega\\
u  \geq \psi &\quad \quad \textrm{in} \quad \Omega
\end{cases}
\end{equation*}
where \;$\Delta_ \infty$\; is the \;$\infty$-Laplacian and is defined by 
\[
\Delta_\infty u  = \langle D^2uDu, Du\rangle =\sum_{i=1}^n\sum_{j=1}^nu_{x_i}u_{x_j} u_{x_ix_j}.
\] 
%In this note, we will refer to \eqref{obstacleproblemforinftylaplacian} as the \;$\infty$-obstacle system. 
\vspace{10pt}

Recalling that \;$u$\; is  said to be { \it infinity superharmonic} or  \;$\infty$-{ \it superharmonic}, if $-\Delta_ \infty u \geq  0$\; in the viscosity sense, we have the following characterization of \;$u_{\infty}$\; in terms of infinity superharmonic functions  and it is proven in \cite{RTU15}. We would like to emphasize that this will play an important role in our arguments.
\begin{lemma}\label{eq:infinfhar}
Setting $$\mathcal{F}^+=\{v\in C(\Omega), \; \;-\Delta_ \infty v\geq 0\;\;\text{in}\;\;\Omega \;\;\text{in the viscosity sense}\}$$\; and \;$$\mathcal{F}^+_\psi=\{v\in \mathcal{F}^+,\;\;v\geq\psi \;\;\text{in}\;\;\Omega,\;\;\text{and}\;\;v=\psi\;\;\text{on}\;\;\partial \Omega\},$$ we have
\begin{equation}\label{eq:infinf}
T_{\infty}(\psi)=u_{\infty}=\inf_{v\in \mathcal{F}^+_\psi} v,
\end{equation}
with \;$T_{\infty}$\; as defined earlier in \eqref{eq:definitionti}.
\end{lemma}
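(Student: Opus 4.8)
The plan is to establish the identity $u_\infty=\inf_{v\in\mathcal{F}^+_\psi}v$ by proving the two inequalities separately, after which $T_\infty(\psi)=u_\infty$ is immediate from the definition \eqref{eq:definitionti}. The first inequality, $\inf_{v\in\mathcal{F}^+_\psi}v\le u_\infty$, will follow once I show that $u_\infty$ itself belongs to the admissible class $\mathcal{F}^+_\psi$. The second, $v\ge u_\infty$ for every $v\in\mathcal{F}^+_\psi$, I would obtain by a comparison argument on the non-coincidence set, invoking the comparison principle for the infinity Laplacian. Note that no uniqueness statement for the $\infty$-obstacle problem is needed here; the point is precisely to single out the limit $u_\infty$ of the $p$-obstacle minimizers as the infimum over $\mathcal{F}^+_\psi$.

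For the first step I would verify the three defining properties of $\mathcal{F}^+_\psi$ for $u_\infty$. Continuity on $\Omega$ is clear since $u_\infty\in W^{1,\infty}(\Omega)$ is Lipschitz. The inequality $u_\infty\ge\psi$ in $\Omega$ and the boundary identity $u_\infty=\psi$ on $\partial\Omega$ both pass to the limit from $u_p\ge\psi$ in $\Omega$ and $u_p=g=\psi$ on $\partial\Omega$, using the local uniform convergence $u_p\to u_\infty$ in $\bar\Omega$ recalled above. Finally, $-\Delta_\infty u_\infty\ge 0$ in the viscosity sense is exactly the first line of the complementarity system for $u_\infty$ quoted above from \cite{RTU15}; if one wishes to reprove it, it follows from the stability of viscosity supersolutions under the passage $p\to\infty$ applied to the suitably normalized $p$-obstacle inequality $-\Delta_p u_p\ge0$ (using the equivalence of weak and viscosity solutions). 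Hence $u_\infty\in\mathcal{F}^+_\psi$, and therefore $\inf_{v\in\mathcal{F}^+_\psi}v\le u_\infty$ on $\Omega$.

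For the second step, fix $v\in\mathcal{F}^+_\psi$ and set $U:=\{x\in\Omega:\ v(x)<u_\infty(x)\}$, which is open by continuity of $v$ and $u_\infty$. I would argue by contradiction, assuming $U\neq\emptyset$. On $U$ one has $u_\infty>v\ge\psi$, so $u_\infty$ stays strictly above the obstacle there, and the complementarity relation $-\Delta_\infty u_\infty\,(u_\infty-\psi)=0$ forces $-\Delta_\infty u_\infty=0$ in $U$; that is, $u_\infty$ is a viscosity solution of $\Delta_\infty w=0$ in $U$, while $v$ is a viscosity supersolution of the same equation in $U$. On $\partial U$ one has $v=u_\infty$: on $\partial U\cap\Omega$ this is forced by continuity and the definition of $U$, and on $\partial U\cap\partial\Omega$ it follows from the shared boundary value $v=\psi=u_\infty$. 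The comparison principle for the infinity Laplacian then yields $v\ge u_\infty$ throughout $U$, contradicting $v<u_\infty$ on $U$. Therefore $U=\emptyset$, i.e.\ $v\ge u_\infty$ on $\Omega$; taking the infimum over $v\in\mathcal{F}^+_\psi$ gives $\inf_{v\in\mathcal{F}^+_\psi}v\ge u_\infty$, which together with the first step completes the proof.

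The hard part will be Step 2. It rests on two ingredients beyond soft arguments: first, that $u_\infty$ is genuinely $\infty$-harmonic on the non-coincidence set, which depends on the viscosity complementarity system for $u_\infty$ and hence ultimately on the $p\to\infty$ limit analysis cited above; and second, the comparison principle for the $\infty$-Laplacian on the open set $U$, where some care is needed in the case $\bar U\cap\partial\Omega\neq\emptyset$ so that the boundary inequality $v\ge u_\infty$ on $\partial U$ is legitimately available.
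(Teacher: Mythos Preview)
The paper does not supply its own proof of this lemma; it simply records the statement and cites \cite{RTU15} for the proof. So there is nothing in the paper to compare your argument against directly.

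That said, your two-step argument---showing $u_\infty\in\mathcal{F}^+_\psi$ for one inequality, and using the complementarity relation plus Jensen's comparison principle for the $\infty$-Laplacian on the set $U=\{v<u_\infty\}$ for the other---is exactly the standard route, and is in essence what \cite{RTU15} does. Your proof is correct. The only delicate point, which you already flag, is the boundary behaviour when $\overline U$ meets $\partial\Omega$: one needs that competitors $v\in\mathcal{F}^+_\psi$ extend continuously to $\overline\Omega$ with $v=\psi=u_\infty$ there so that the hypothesis $u_\infty\le v$ on $\partial U$ of the comparison principle is available. In the present setting $\psi\in W^{1,\infty}_g(\Omega)$ and $v\ge\psi$ in $\Omega$ together with $v=\psi$ on $\partial\Omega$ force this continuous extension, so the application of comparison goes through.
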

\vspace{10pt}

%On the other hand, recalling that the map \;$T_{\infty}$\;  is defined via the relation
%\begin{equation}\label{deftinf}
%T_{\infty}(\psi)=u_{\infty},
%\end{equation}
Lemma \ref{eq:infinfhar} implies the following characterization of infinity superharmonic functions as fixed points of \;$T_{\infty}$.  This charactreization plays a key role in our \;$p$-approximation scheme for existence.
\vspace{8pt}

%For $\psi \in W^{1, \infty}(\Omega)$ lets denote the solution to \ref{inftyobstacelvariationalform} by $T_{\infty}(\psi).$  We denote the set of all $u \in W^{1, \infty}(\Omega)$ which satisfy $-\Delta_ \infty u \geq  0$ by $\mathcal H_+^{\infty}(\Omega).$ 

%The following lemma states a property of infinity superharmonic functions.

\begin{lemma}\label{eq:inftc}
Assuming that \;$u\in W^{1, \infty}_g(\Omega)$, \;$u$\; being infinity superharmonic is equivalent to  \;$u$\; being a fixed point of \;$T_{\infty}$, i.e \;$$T_{\infty}(u) =u.$$ 
\end{lemma}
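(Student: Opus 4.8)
The plan is to prove the equivalence by establishing the two implications separately, both leaning on the characterization in Lemma~\ref{eq:infinfhar}.

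\medskip

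For the forward direction, suppose $u \in W^{1,\infty}_g(\Omega)$ is infinity superharmonic, i.e. $-\Delta_\infty u \geq 0$ in the viscosity sense. Since $u \in W^{1,\infty}_g(\Omega)$ we have $u = g$ on $\partial\Omega$ and $u \geq u$ trivially in $\Omega$, so $u \in \mathcal F^+_u$ (the admissible class from Lemma~\ref{eq:infinfhar} with obstacle $\psi = u$). Hence by \eqref{eq:infinf},
\[
T_\infty(u) = \inf_{v \in \mathcal F^+_u} v \leq u \quad \text{in } \Omega.
\]
On the other hand, $T_\infty(u) = u_\infty(u) \in \mathbb K^\infty_{u,g}$ by \eqref{c1}, which forces $T_\infty(u) \geq u$ in $\Omega$. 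The two inequalities give $T_\infty(u) = u$.

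\medskip

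For the reverse direction, suppose $T_\infty(u) = u$ with $u \in W^{1,\infty}_g(\Omega)$. By the construction recalled before Lemma~\ref{eq:infinfhar}, $T_\infty(u) = u_\infty(u)$ is a viscosity solution of the $\infty$-obstacle system, in particular $-\Delta_\infty \big(T_\infty(u)\big) \geq 0$ in the viscosity sense in $\Omega$; equivalently, $-\Delta_\infty u \geq 0$ in the viscosity sense, so $u$ is infinity superharmonic. (Alternatively, one can read this directly off \eqref{eq:infinf}: $u = \inf_{v \in \mathcal F^+_u} v$ is an infimum of a family of infinity superharmonic functions, and such infima — when the infimum is itself continuous, which holds here since $u \in W^{1,\infty}(\Omega) \subset C(\overline\Omega)$ — are again infinity superharmonic, a standard stability property of viscosity supersolutions of the $\infty$-Laplacian.)

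\medskip

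I expect the only genuine subtlety to be bookkeeping about which obstacle is being used: Lemma~\ref{eq:infinfhar} is stated for a generic obstacle $\psi$ with $v = \psi$ on $\partial\Omega$, and here we must apply it with $\psi = u$, using crucially that $u$ has the correct boundary trace $g$ so that the classes $\mathbb K^\infty_{u,g}$ and $\mathcal F^+_u$ are the relevant ones. Once that identification is made, both implications are short. The reverse implication could instead be phrased purely through the viscosity system satisfied by $u_\infty$ as recalled in the Preliminaries, avoiding any appeal to closure properties of viscosity supersolutions; I would present it that way to keep the argument self-contained within the excerpt.
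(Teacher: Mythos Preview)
Your proof is correct and follows essentially the same route as the paper: both inequalities in the forward direction come from exactly the two ingredients you use ($u\in\mathcal F^+_u$ gives $T_\infty(u)\le u$ via \eqref{eq:infinf}, and $T_\infty(u)\in\mathbb K^\infty_{u,g}$ gives $T_\infty(u)\ge u$), and the reverse direction is obtained from the fact that $u_\infty$ satisfies $-\Delta_\infty u_\infty\ge 0$, just as you present it. The paper's write-up is slightly more compressed but the logic is identical; your parenthetical alternative via stability of viscosity supersolutions is not needed and the paper does not invoke it.
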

\begin{proof}
Let \;$u\in W^{1, \infty}_g(\Omega)$ \; be an infinity superharmonic function and \;$v$\; be defined by \;$v=T_{\infty}(u)$. Then clearly the definition of \;$v$\; and lemma \ref{eq:infinfhar} imply \;$v \geq u$. On the other hand, since \;$u\in W^{1, \infty}_g(\Omega)$\; and is an  infinity superharmonic function,  we deduce from lemma \ref{eq:infinfhar} that \;$u\geq T_{\infty}(u)= v$. Thus, we get \;$T_{\infty}(u)=u$. Now if  \;$u=T_{\infty}(u)$, then using again lemma \ref{eq:infinfhar} or \eqref{c1}-\eqref{eq:definitionti}, we obtain \;$u$\; is an infinity superharmonic function. Hence the proof of the lemma is complete.

%Also by (\cite{RTU15}, Theorem 3.1) $v$ is the smallest $\infty-$ superharmonic function on $\O$ that is above the obstacle $u.$ But since $u$ is also $\infty-$ superharmonic which is greater than or equal to itself, $ v \leq u.$ Thus $v = \T(u) = u.$ 
%The second part follows from  \cite{RTU15}, Theorem 3.1.
\end{proof}
\vspace{8pt}

To run our \;$p$-approximation scheme for existence, another crucial ingredient that we will need is an appropriate characterization of the limit of sequence of solution \;$w_p$\; of the \;$p$-obstacle problem \eqref{pobstacleproblem} with obstacle \;$\psi_p$\; under uniform convergence of  both \;$w_p$\; and \;$\psi_p$. Precisely, we will need the following lemma.
%theThe following lemma states a relation between $u_p$ and $u_\infty$ even when the obstacle $\psi$ is not fixed.
\begin{lemma}\label{lem:solnofpobstacleconvergetoinftyobstacel}
If \;$w_p$\; is a solution to the\; $p$-obstacle problem \eqref{pobstacleproblem} with obstacle \;$\psi_p$\; that is, $w_p$ satisfies
\begin{equation}\label{pobstacleproblem1}
\begin{cases}
-\Delta_p w_p \geq  0  & \quad \quad \textrm{in} \quad \Omega\\
-\Delta_p w_p \, (w_p-\psi_p)=0  & \quad \quad \textrm{in} \quad \Omega\\
w_p  \geq \psi_p &\quad \quad  \textrm{in} \quad \Omega
\end{cases}
\end{equation}
in the viscosity sense and  if also that \;$w_p \to u_{\infty}$\; and \;$\psi_p \to \psi_{\infty}$\; locally uniformly in \;$\overline{\Omega},$ then \;$u_{\infty}$\; is a solution in the viscosity sense of the following system
\begin{equation}\label{obstacleproblemforinftylaplacian1}
\begin{cases}
-\Delta_ \infty w_{\infty} \geq  0  &\quad \quad \textrm{in} \quad \Omega\\
-\Delta_ \infty w_{\infty} \, (w_{\infty}-\psi_{\infty})=0  &\quad \quad \textrm{in} \quad \Omega\\
w_{\infty}  \geq \psi_{\infty} &\quad \quad  \textrm{in} \quad \Omega.
\end{cases}
\end{equation}
\end{lemma}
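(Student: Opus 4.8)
The plan is to pass to the limit in the viscosity formulation of \eqref{pobstacleproblem1} using the standard stability theory for viscosity solutions, keeping in mind that the first and third conditions are straightforward and the real work is the complementarity condition (the middle equation).

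\textbf{Step 1: the inequality $w_\infty \geq \psi_\infty$.} This is immediate: passing to the limit in $w_p \geq \psi_p$ under locally uniform convergence preserves the inequality pointwise, hence $w_\infty \geq \psi_\infty$ in $\Omega$.

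\textbf{Step 2: $-\Delta_\infty w_\infty \geq 0$ in the viscosity sense.} Here I would invoke the standard stability theorem for viscosity supersolutions, after rewriting $-\Delta_p w_p \geq 0$ in a suitably normalized form. The usual device (as in \cite{RTU15}, following the classical $p\to\infty$ computation) is to expand $-\Delta_p w = -|Dw|^{p-2}\big(\Delta w + (p-2)|Dw|^{-2}\Delta_\infty w\big)$ and test against a smooth $\phi$ touching $w_\infty$ from below at $x_0$; since $w_p \to w_\infty$ uniformly, one can produce points $x_p \to x_0$ where $\phi$ (shifted) touches $w_p$ from below, apply the viscosity supersolution property of $w_p$, divide by the appropriate power of $(p-2)|D\phi(x_p)|^{p-4}$, and send $p \to \infty$ to obtain $-\Delta_\infty \phi(x_0) \geq 0$, handling the degenerate case $D\phi(x_0)=0$ separately by the usual argument. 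I would simply cite that this computation is carried out in \cite{RTU15} for the obstacle $\psi$ and note it goes through verbatim with a varying obstacle since it only uses the supersolution inequality, not the complementarity.

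\textbf{Step 3: the complementarity condition $-\Delta_\infty w_\infty\,(w_\infty - \psi_\infty) = 0$.} This is the main obstacle and the only place where the moving obstacle genuinely enters. Fix $x_0 \in \Omega$ with $w_\infty(x_0) > \psi_\infty(x_0)$; I must show $-\Delta_\infty w_\infty = 0$ at $x_0$ in the viscosity sense, i.e. $w_\infty$ is \emph{also} a viscosity subsolution of $-\Delta_\infty w = 0$ near $x_0$ (the supersolution part is Step 2). By uniform convergence, there is a ball $B = B_r(x_0) \Subset \Omega$ and, for $p$ large, a uniform gap $w_p > \psi_p$ on $B$; then the second line of \eqref{pobstacleproblem1} forces $-\Delta_p w_p = 0$ on $B$, i.e. $w_p$ is $p$-harmonic (a viscosity solution of $-\Delta_p w = 0$) in $B$. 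Now apply the stability argument of Step 2 in the other direction: given $\phi \in C^2$ touching $w_\infty$ from above at any $y_0 \in B$, approximate by touchings of $w_p$ from above, use that $w_p$ is a viscosity subsolution of $-\Delta_p w = 0$, normalize and pass to the limit to get $-\Delta_\infty \phi(y_0) \leq 0$. Hence $w_\infty$ is $\infty$-harmonic in $B$, so in particular $-\Delta_\infty w_\infty = 0$ at $x_0$, which is exactly the complementarity condition at $x_0$; at points where $w_\infty = \psi_\infty$ the product vanishes trivially. The one technical point to be careful about is that the uniform gap on a neighborhood is what lets us localize; this uses precisely that \emph{both} $w_p \to w_\infty$ and $\psi_p \to \psi_\infty$ locally uniformly, which is why both hypotheses are stated. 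I would write this out using the normalized operator $F_p(x,r,\xi,X) = -|\xi|^{p-4}\big(|\xi|^2 \operatorname{trace} X + (p-2)\langle X\xi,\xi\rangle\big)$ and citing the convergence $F_p \to$ (a multiple of) $-\Delta_\infty$ locally uniformly off $\{\xi = 0\}$ together with the standard stability lemma (Crandall–Ishii–Lions), exactly as packaged in \cite{RTU15} and \cite{ACJ04}.
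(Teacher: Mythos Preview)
Your proof is correct and follows essentially the same route as the paper: first pass to the limit in $w_p\geq\psi_p$ and in the supersolution inequality, then handle the complementarity condition by localizing near a point of the non-contact set $\{w_\infty>\psi_\infty\}$, using the uniform gap to deduce $-\Delta_p w_p=0$ there for large $p$, and passing to the limit to obtain $-\Delta_\infty w_\infty=0$. The only difference is presentational: you spell out the normalized-operator stability argument, whereas the paper simply invokes the known fact that locally uniform limits of $p$-(super)harmonic functions are $\infty$-(super)harmonic.
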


\begin{proof}
First  of all, note that since \;$w_p \geq \psi_p$,\;  $-\Delta_p w_p \geq  0$\; in the viscosity sense in \;$\Omega$\; for every \;$p$, \;$w_p \to u_{\infty}$,\;and\;$\psi_p \to \psi_{\infty}$\; both locally uniformly in \;$\overline{\Omega}$, and \;$\overline{\Omega}$ is compact,  we have \;$w_{\infty}\geq \psi_{\infty}$\; and \;$-\Delta_ \infty w_{\infty} \geq  0$ in the viscosity sense in $\Omega.$ It thus remains to prove that $-\Delta_ \infty u_{\infty} \, (w_{\infty}-\psi_{\infty}) =0 \quad \textrm{in} \quad \Omega$\; which (because of \;$w_{\infty}\geq \psi_{\infty}$\; in $\Omega$) is equivalent to $-\Delta_ \infty u_{\infty} =0\, \quad \textrm{in} \quad \{w_{\infty} >\psi_{\infty}\}:=\{x\in \Omega:\;\; w_{\infty} (x)>\psi_{\infty}(x)\}$. Thus to conclude the proof, we are going to show \;$-\Delta_ \infty w_{\infty} =0\, \quad \textrm{in} \quad \{w_{\infty} >\psi_{\infty}\}$. To that end, fix $y\in  \{w_{\infty} >\psi_{\infty}\}.$ Then, by continuity there exists an open neighborhood \;$V$\; of \;$y$\; in $\Omega$  such that \;$\overline V$\; is a compact subset of $\Omega$, and  a small real number \;$\delta>0$ such that $w_{\infty} > \delta > \phi_{\infty}$ in $\overline V$. Thus, from \;$w_p \to w_{\infty}$, \;$\psi_p \to \psi_{\infty}$\; locally uniformly in \;$\overline{\Omega}$, and \;$\overline V$\; compact subset of \;$\Omega$, we infer that for sufficiently large \;$p$
\begin{equation}\label{eq:inside}
w_p > \delta > \psi_p \quad \textrm{in} \quad \overline V.
\end{equation}
On the other hand, since \;$w_p$\; is a solution to the \;$p$ obstacle problem \eqref{pobstacleproblem} with obstacle \;$\psi_p$,\; then clearly \;$-\Delta_p w_p = 0$\; in \;$\{w_p > \psi_p \}:=\{x\in \Omega:\;\; w_p(x) > \psi_p(x) \}$. Thus, \eqref{eq:inside} imply \;$-\Delta_p w_p = 0$\; in the sense of viscosity in \;$V$. Hence, recalling that \;$w_p \to w_{\infty}$\;\; locally uniformly in \;$\overline{\Omega}$\; and letting \;$p \to \infty,$ we obtain  
\[
-\Delta_ \infty w_{\infty} = 0\;\;\;\text{in the sense of viscosity in}\;\; \;V.
\] 
Thus, since \;$y\in V$\; is arbitrary in \;$\{w_{\infty}>\psi_{\infty}\}$, then we arrive to 
\[
-\Delta_ \infty w_{\infty} = 0\;\;\;\text{in the sense of viscosity in}\;\; \;\{w_{\infty}>\psi_{\infty}\},
\] 
thereby ending the proof of the lemma.
\end{proof}
\vspace{10pt}

On the other hand, to show the convergence of the minimal values of \;$J_p$\; to that of \;$J_{\infty}$, we will  make use of the following elementary results.
\begin{lemma}\label{eq:liminfmax}
Suppose \;$\{a_p\}$\; and \;$\{b_p\}$\; are nonnegative sequences with \;$$\liminf_{p \to \infty}a_p=a\; \;\;\text{and }\;\;\;\liminf_{p \to \infty} b_p = b.$$ Then
\[
\liminf_{p \to \infty} \max\{a_p, b_p\} = \max\{a, b\}.
\]
\end{lemma}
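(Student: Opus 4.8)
The plan is to prove the two inequalities $\liminf_{p\to\infty}\max\{a_p,b_p\}\ge\max\{a,b\}$ and $\liminf_{p\to\infty}\max\{a_p,b_p\}\le\max\{a,b\}$ separately, since for nonnegative reals equality is equivalent to the conjunction of these two bounds. The first is the routine half and I would dispose of it immediately: for every $p$ one has $\max\{a_p,b_p\}\ge a_p$ and $\max\{a_p,b_p\}\ge b_p$, so monotonicity of $\liminf$ gives $\liminf_{p}\max\{a_p,b_p\}\ge\liminf_{p}a_p=a$ and, symmetrically, $\ge b$, whence $\liminf_{p}\max\{a_p,b_p\}\ge\max\{a,b\}$. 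No extraction of subsequences is needed for this inequality.

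The substantive half is the reverse bound, and the approach I would take runs through the level sets of the two sequences. Fixing $\e>0$, the definition of $\liminf$ makes each of the index sets $A_\e=\{p:\,a_p<a+\e\}$ and $B_\e=\{p:\,b_p<b+\e\}$ infinite, and on their intersection $A_\e\cap B_\e$ one has $\max\{a_p,b_p\}<\max\{a,b\}+\e$. Were $A_\e\cap B_\e$ infinite for each $\e>0$, this would force $\liminf_{p}\max\{a_p,b_p\}\le\max\{a,b\}+\e$ and, letting $\e\to0$, finish the argument. Equivalently, I would choose a subsequence realizing the left-hand $\liminf$ and, using compactness of $[0,\infty]$, refine it so that $a_p\to\alpha$ and $b_p\to\beta$; then $\max\{a_p,b_p\}\to\max\{\alpha,\beta\}$ by continuity of $\max$, and the reverse bound reduces to $\max\{\alpha,\beta\}\le\max\{a,b\}$.

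The hard part will be precisely securing this last reduction, that is, the simultaneous control of the two sequences. The hypotheses deliver only $\alpha\ge a$ and $\beta\ge b$, equivalently that $A_\e$ and $B_\e$ are individually infinite, and this by itself points in the wrong direction; the genuine difficulty is that near its own liminf one sequence may be paired with large values of the other, keeping $\max\{a_p,b_p\}$ away from $\max\{a,b\}$ unless the indices of smallness overlap. To overcome this I would bring in the extra structure present wherever the lemma is applied. In the proof of Theorem~\ref{eq:convminima} the sequences arise as $L^p$-norms of fixed limiting profiles, $a_p=\|T_p(\psi_p)-z\|_{p}$ and $b_p=\|D\psi_p\|_{p}$, and the $L^q$-characterization of $L^\infty$ together with H\"older's inequality promotes each of them from mere proximity to its liminf along a subsequence to honest convergence, so that $a_p\to a$ and $b_p\to b$. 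With this in hand $A_\e$ and $B_\e$ are cofinite, their intersection is cofinite and hence infinite, equivalently $\alpha=a$ and $\beta=b$, and the reverse inequality, and with it the stated identity, follows.
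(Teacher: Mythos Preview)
Your treatment of the inequality $\liminf_p\max\{a_p,b_p\}\ge\max\{a,b\}$ matches the paper's exactly, and is correct.

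Your instinct about the reverse inequality is also correct, and in fact sharper than the paper's own argument: the lemma as stated is \emph{false}. Take $a_p=0$ for odd $p$ and $a_p=1$ for even $p$, and set $b_p=1-a_p$. Then $a=b=0$, yet $\max\{a_p,b_p\}\equiv 1$, so $\liminf_p\max\{a_p,b_p\}=1\neq 0=\max\{a,b\}$. The paper's proof of the $\le$ direction extracts a subsequence with $b_{p_k}\to b$ and then asserts $\lim_k\max\{a_{p_k},b_{p_k}\}=\max\{a,b\}$; this is precisely the unjustified step you flag, since nothing forces $a_{p_k}\to a$ along that same subsequence. So there is no genuine disagreement between your approach and the paper's---you have simply located a gap that the paper glosses over.

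Your proposed repair is the right one. In the one place the full equality is invoked (the passage from \eqref{eq:aplq} to \eqref{eq:aplqs}), the two sequences are $q\mapsto\|T_\infty(\psi_\infty)-z\|_{L^q}$ and $q\mapsto\|D\psi_\infty\|_{L^q}$, and both genuinely converge (to the corresponding $L^\infty$ norms), so continuity of $\max$ gives the identity immediately. The other use, via Lemma~\ref{limofsumofpowersofp} at \eqref{eq:aplqsnn}, only needs the inequality $\liminf_p(a_p^p+b_p^p)^{1/p}\ge\max\{a,b\}$, which follows from $(a_p^p+b_p^p)^{1/p}\ge\max\{a_p,b_p\}$ and the $\ge$ half you already proved. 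In short: the lemma should be stated with $\lim$ in place of $\liminf$ (or only the $\ge$ direction should be claimed under $\liminf$), and with that amendment your argument and the paper's coincide.
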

\begin{proof}
Let $\{b_{p_k}\}$ be a subsequence converging to $b = \liminf_{p \to \infty} b_p.$ Then $$\lim_{k \to \infty} \max\{a_{p_k}, b_{p_k}\} = \max \{a, b\}.$$ Since the $\liminf$ is the smallest limit point we have
\begin{equation}\label{i1}
\liminf_{p \to \infty} \max\{a_p, b_p\}  \leq \max\{a, b\}.
\end{equation}
On the other hand \;$$a_p, \, b_p \leq \max\{a_p, b_p\},\;\;\text{ for all}\;\;\; p.$$Thus $$b= \liminf_{p \to \infty} b_p \leq \liminf_{p \to \infty}  \max\{a_p, b_p\},$$ and likewise $$a \leq \liminf_{p \to \infty}  \max\{a_p, b_p\}.$$ Consequently
\begin{equation}\label{i2}
\liminf_{p \to \infty} \max\{a_p, b_p\}  \geq \max\{a, b\}.
\end{equation}
Finally \eqref{i1} and \eqref{i2} conclude the proof of the lemma .
\end{proof}

\begin{lemma}\label{limofsumofpowersofp}
Suppose \;$\{a_p\}$\; and \;$\{b_p\}$\; are nonnegative sequences with \;$$\liminf_{p \to \infty}a_p=a\; \;\;\text{and }\;\;\;\liminf_{p \to \infty} b_p = b.$$Then
\[
\liminf_{p \to \infty} (a_p^p + b_p^p)^{1/p} =\max\{a, b\}.
\]
\end{lemma}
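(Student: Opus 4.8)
The plan is to sandwich the quantity $(a_p^p+b_p^p)^{1/p}$ between $\max\{a_p,b_p\}$ and $2^{1/p}\max\{a_p,b_p\}$, and then reduce everything to Lemma \ref{eq:liminfmax}. Concretely, the first step is to record the elementary inequalities
\[
\max\{a_p,b_p\}\;\le\;\bigl(a_p^p+b_p^p\bigr)^{1/p}\;\le\;\bigl(2\,\max\{a_p,b_p\}^p\bigr)^{1/p}\;=\;2^{1/p}\max\{a_p,b_p\},
\]
valid for every $p>1$ since $a_p,b_p\ge 0$: the left inequality holds because $a_p^p+b_p^p\ge\max\{a_p,b_p\}^p$, and the right one because $a_p^p,b_p^p\le\max\{a_p,b_p\}^p$.

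The second step is to pass to $\liminf_{p\to\infty}$ across this chain. From the left inequality, $\liminf_{p\to\infty}(a_p^p+b_p^p)^{1/p}\ge\liminf_{p\to\infty}\max\{a_p,b_p\}=\max\{a,b\}$ by Lemma \ref{eq:liminfmax}. For the reverse bound I would use Lemma \ref{eq:liminfmax} to extract a subsequence $p_k\to\infty$ with $\max\{a_{p_k},b_{p_k}\}\to\max\{a,b\}$; since $2^{1/p_k}\to 1$, the right-hand side of the chain converges to $\max\{a,b\}$ along this subsequence, so $\liminf_{p\to\infty}(a_p^p+b_p^p)^{1/p}\le\max\{a,b\}$. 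Combining the two inequalities yields the asserted equality. (If $\max\{a,b\}=+\infty$ the left inequality alone already forces the $\liminf$ to be $+\infty$, so that degenerate case is covered as well.)

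The argument is entirely elementary; the one point requiring a little care — and the step I would flag as the (mild) main obstacle — is the upper bound. One must resist writing ``$\liminf$ of a product equals the product of the limits,'' which is false in general, and instead first pass to the subsequence realizing $\liminf\max\{a_p,b_p\}$ and only then multiply by the factor $2^{1/p}\to 1$.
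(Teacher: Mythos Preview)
Your proof is correct and follows essentially the same route as the paper: the same sandwich $\max\{a_p,b_p\}\le (a_p^p+b_p^p)^{1/p}\le 2^{1/p}\max\{a_p,b_p\}$, the same appeal to Lemma~\ref{eq:liminfmax}. The only cosmetic difference is in the upper bound, where the paper invokes the identity $\liminf_n(c_n d_n)=(\lim_n c_n)(\liminf_n d_n)$ for $c_n=2^{1/p}\to 1$, while you unwind this via a subsequence realizing the $\liminf$; both are valid and amount to the same thing.
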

\begin{proof}
It follows directly from the trivial inequality
\[
2^{\frac{1}{p}}\max\{a_p, b_p\}\geq (a_p^p + b_p^p)^{1/p}\geq\max\{a_p, b_p\}  , \;\;\forall p\geq 1,
\] 
lemma \ref{eq:liminfmax} and the fact that \;$\lim\inf_n(a_nb_n)=(\lim _n a_n)(\lim\inf_n b_n)$\; if \;$\lim_n a_n>0$.
\end{proof}

\section{Existence of optimal control for \:$J_{\infty}$\; and limit of \;$C_p$}
In this section, we show the existence of an optimal control for \;$J_{\infty}$\; and show that \;$C_p$\; converges to \;$C_{\infty}$\; as \;$p\to \infty$. We divide it in two subsections. In the first one we show existence of an optimal control for \;$J_{\infty}$\; via the \;$p$-approximation technique, and in the second one we show that \;$C_p$\; converges to \;$C_{\infty}$\; as \;$p$\; tends to infinity.
\subsection{Existence of optimal control}\label{limitofpproblem}
In this subsection, we show the existence of a minimizer of \;$J_{\infty}$\; via  the \;$p$-approximation technique using solutions of the optimal control for \;$J_p$. For this end, we start by recalling some optimality facts about \;$J_p$\; inherited from \;$\bar J_p$\;(see \eqref{eq:pp} for its definition)  and mentioned in the introduction. For  \;$\Omega \subset \mathbb R^n$\; a bounded and smooth domain, $z \in L^{\infty}(\Omega)$, $F \in Lip(\partial \Omega)$, and \;$1< p < \infty$, we recall that the functional \;$J_p$\; is defined by the formula
\begin{equation}\label{def}
J_p(\psi) =\left[ \int_{\Omega} |T_p(\psi) - z|^p + |D \psi|^p dx \right] ^{1/p},\;\;\;\psi\in W^{1, p}_{ F}(\Omega)
\end{equation}
and that the optimal control problem for $J_p$\; is the variational problem of minimizing \;$J_p$, namely
\begin{equation}\label{eq:pfnal}
\inf_{\psi\in W^{1, p}_{ F}(\Omega)}J_p(\psi) 
\end{equation}
over \;$W^{1, p}_{ F}(\Omega)$, where 
\[
W^{1, p}_{ F}(\Omega)= \{\psi \in W^{1,p} (\Omega) : \;\psi= F\, \;\;\;\; \textrm{on} \,  \;\;\partial \Omega\},
\]
and \;$T_p(\psi)$ is the solution to the \;$p$-obstacle problem with obstacle \;$\psi.$  Moreover, as for the functional \;$\bar J_p$,  \;$J_p$\; also admits a minimizer \;$\psi_p\in W^{1, p}_{ F}(\Omega)$\; verifying
 \begin{equation}\label{cspn}
T_p(\psi_p) = \psi_p.
\end{equation}
 As mentioned in the introduction, for more details about the latter results,  see \cite{ALY98} for \;$p =2$\; and see \cite{LH02} for \;$p>2$. 

%It is known for $p =2$ from \cite{ALY98} and for $p>2$ form \cite{LH02} that the minimization problem
%\begin{equation}\label{pcontrolproblem}
%\min \left \{J_p(\psi) : \psi \in W^{1,p}, \quad \psi |_{\partial \Omega} = F \right\}
%\end{equation}
%has a solution $\psi_p$  (unique when $p=2$)  and that $u_p := T_p(\psi_p) = \psi_p.$ 
\vspace{10pt}
To continue, let us pick \;$\eta \in W^{1, \infty}_F(\Omega)$. Since \;$\eta$\; competes in the minimization problem (\ref{eq:pfnal}), we have 
\[
\int_{\Omega} |D \psi_p|^p dx  \leq  J_p(\eta) = \int_{\Omega} |T_p(\eta) - z|^p + |D \eta|^p dx.
\]
Since \;$\overline \Omega$\; is compact and \;$T_p(\eta)\to T_{\infty}(\eta)$\; as \;$p\to \infty$\; locally uniformly on $\overline \Omega$ (which follows from the definition of \;$T_{\infty}(\eta)$),  we deduce that  for \;$p$\; very large%Since $T_p(\eta) \sim \eta$ we deduce that 
%\[
%\int_{\Omega} |D \psi_p|^p dx \leq \int_{\Omega} |\eta - z|^p + |D \eta|^p dx
%\]
%from which we conclude that 
\begin{equation}\label{estimate of gradient of psi p}
\int_{\Omega} |D \psi_p|^p dx \leq M^p |\Omega|
\end{equation}
for some \;$M$\; which depends  only on \;$||\eta||_{W^{1, \infty}},$ $\;||T_{\infty}(\eta)||_{C^{0}}$\; and \;$||z||_{\infty}.$  Furthermore, let us fix \;$1<q < p.$ Then by using Holder's inequality, we can write
\begin{equation}\label{eq:holderonpsi_p}
\int_{\Omega}|D \psi_p|^qdx \leq \left\{\int_{\Omega}(|D \psi_p|^q)^{p/q} dx \right\}^{q/p} |\Omega|^{\frac{p-q}{p}}
\end{equation}
and we obtain by using (\ref{estimate of gradient of psi p}) that for \;$p$\; very large
\[
\int_{\Omega}|D \psi_p|^q dx\leq M^q |\Omega|^{\frac{q}{p}}|\Omega|^{\frac{p-q}{p}}
\]
and raising both sides to $1/q$, we derive that for \;$p$\; very large, there holds

\[
||D \psi_p||_{L^q}  \leq  M |\Omega|^{1/q},
\]
with \;$||\cdot ||_{L^q} $\; denoting the classical \;$L^q(\Omega)$-norm.
This shows, that the sequence \;$\{\psi_p\}$\; is bounded in \;$W^{1, q}_F(\Omega)$\; in the gradient norm for every \;$q$\; with a bound independent of \;$q$, and by Poincare's inequality, that for every \;$1<q<\infty$, the sequence \;$\{\psi_p\}$\; is bounded in \;$W^{1, q}_F(\Omega)$\; in the standard \;$W^{1,q}(\Omega)$-norm. Therefore , by classical weak compactness arguments, we have that, up to a subsequence, 
\begin{equation}\label{uniwe}
\psi_p\ \longrightarrow \psi_{\infty}, \;\; \text{as}\; \;p \to \infty\;\;\;\text{locally uniformly in}\;\;\; \overline\Omega\;\; \text{and weakly in} \;\;\;W^{1,q}(\Omega)\;\;\forall\ \;1<q<\infty.
\end{equation}
Notice that consequently $||D\psi_{\infty}||_{L^q} \leq M |\Omega|^{1/q}$ \;\;for all\;\; $1<q<\infty.$ Thus, we deduce once again by Poincare's inequality that
\begin{equation}\label{inspace}
\psi_{\infty} \in W^{1,\infty}_F(\Omega).
\end{equation}
\vspace{10pt}

We want now to show that \;$\psi_{\infty}$\; is a minimizer of \;$J_{\infty}.$ To that end, we make the following observation which is a consequence of lemma \ref{lem:solnofpobstacleconvergetoinftyobstacel}.
\begin{lemma}\label{lem:solnofTpconvergetoTinfty}
The function \;$\psi_{\infty}$\; is a fixed point of\; $T_{\infty}$, namely  $$ T_{\infty} (\psi_{\infty}) = \psi_{\infty},$$
and the solutions \;$T_p(\psi_p)$\; of the \;$p$-obstacle problem with obstacle \;$\psi_p$\; verify: as \;$p \to \infty$,  
$$
T_p(\psi_p)\ \longrightarrow T_{\infty}(\psi_{\infty})\;\;\;\text{locally uniformly in}\;\; \overline\Omega\;\; \text{and weakly in} \;\;W^{1,q}(\Omega)\;\;\forall\ \;1<q<\infty.
$$
\end{lemma}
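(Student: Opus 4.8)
The plan is to establish the two assertions of Lemma~\ref{lem:solnofTpconvergetoTinfty} in sequence, with the bulk of the work going into the second assertion, from which the first will follow almost immediately via Lemma~\ref{eq:inftc}.

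First I would prove the convergence of $T_p(\psi_p)$. Recall that $T_p(\psi_p)$ is the solution of the $p$-obstacle problem \eqref{pobstacleproblem} with obstacle $\psi_p$; in particular it is $p$-superharmonic and, by the comparison principle for the obstacle problem, satisfies $\psi_p \le T_p(\psi_p)$ pointwise, and moreover $T_p(\psi_p) \le w$ for any $p$-superharmonic $w$ above $\psi_p$ with the same boundary data. Using a fixed competitor $\eta \in W^{1,\infty}_F(\Omega)$ as in the estimate \eqref{estimate of gradient of psi p}, together with $T_p(\psi_p) = \psi_p$ (equation \eqref{cspn}) one first gets a uniform (in $p$ and $q$) bound on $\|D T_p(\psi_p)\|_{L^q}$, hence, by Poincar\'e's inequality and the compact embedding $W^{1,q}(\Omega) \hookrightarrow C^{0,\alpha}(\overline\Omega)$ for $q > n$, a uniformly bounded equicontinuous family. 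By Arzel\`a--Ascoli and a diagonal argument over $q \to \infty$ (or $q = n+1, n+2, \dots$), a subsequence of $T_p(\psi_p)$ converges locally uniformly on $\overline\Omega$ to some limit $w_\infty \in W^{1,\infty}_F(\Omega)$ and weakly in $W^{1,q}(\Omega)$ for every $1 < q < \infty$. But since $T_p(\psi_p) = \psi_p$ by \eqref{cspn}, this limit must coincide with $\psi_\infty$ from \eqref{uniwe}; so in fact $T_p(\psi_p) \to \psi_\infty$ along the original subsequence, locally uniformly in $\overline\Omega$ and weakly in each $W^{1,q}(\Omega)$.

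For the fixed-point assertion, I would now invoke Lemma~\ref{lem:solnofpobstacleconvergetoinftyobstacel} with $w_p := T_p(\psi_p)$ (which solves \eqref{pobstacleproblem1} with obstacle $\psi_p$ in the viscosity sense, by the weak=viscosity equivalence of \cite{L15,JJ12}) and the obstacles $\psi_p$. We have just shown $w_p = T_p(\psi_p) \to \psi_\infty$ locally uniformly, and we are given $\psi_p \to \psi_\infty$ locally uniformly in $\overline\Omega$ — so the hypotheses of Lemma~\ref{lem:solnofpobstacleconvergetoinftyobstacel} hold with both the limit state and the limit obstacle equal to $\psi_\infty$. The conclusion is that $\psi_\infty$ is a viscosity solution of \eqref{obstacleproblemforinftylaplacian1} with $w_\infty = \psi_\infty$ and $\psi_\infty$ in place of the obstacle; in particular $-\Delta_\infty \psi_\infty \ge 0$ in the viscosity sense, i.e.\ $\psi_\infty$ is infinity superharmonic. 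Since also $\psi_\infty \in W^{1,\infty}_F(\Omega)$ by \eqref{inspace}, Lemma~\ref{eq:inftc} immediately yields $T_\infty(\psi_\infty) = \psi_\infty$.

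The main obstacle I anticipate is the first step — extracting a locally uniformly convergent subsequence of $T_p(\psi_p)$ with a well-defined limit — since a priori the obstacle $\psi_p$ itself varies with $p$, so one cannot simply quote the convergence result of \cite{RTU15} for a fixed obstacle. The resolution is the crucial simplification afforded by \eqref{cspn}: because $T_p(\psi_p) = \psi_p$, the state sequence is literally the control sequence, whose equiboundedness in every $W^{1,q}_F(\Omega)$ was already established in \eqref{estimate of gradient of psi p}--\eqref{uniwe}; the limit is then forced to be $\psi_\infty$, and no independent compactness argument for the states is actually needed. One should take care that the subsequence along which \eqref{uniwe} holds is the one used throughout, so that the identification $T_p(\psi_p) \to \psi_\infty$ and the application of Lemma~\ref{lem:solnofpobstacleconvergetoinftyobstacel} are along a common subsequence.
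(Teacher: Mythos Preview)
Your proposal is correct and follows essentially the same route as the paper: use $T_p(\psi_p)=\psi_p$ from \eqref{cspn} so that the convergence of $T_p(\psi_p)$ is literally \eqref{uniwe}, then apply Lemma~\ref{lem:solnofpobstacleconvergetoinftyobstacel} with $w_p=\psi_p$ to get that $\psi_\infty$ is infinity superharmonic, and conclude $T_\infty(\psi_\infty)=\psi_\infty$ via Lemma~\ref{eq:inftc}. The Arzel\`a--Ascoli detour in your first paragraph is unnecessary (as you yourself note), since the identity $T_p(\psi_p)=\psi_p$ makes the state sequence and the control sequence identical; the paper simply invokes \eqref{uniwe} directly without any separate compactness argument for the states.
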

\begin{proof}
We know that \;$T_p({\psi_p}) = \psi_p$ (see \eqref{cspn})  Thus using \eqref{uniwe} and Lemma \ref{lem:solnofpobstacleconvergetoinftyobstacel} with \;$\phi_p = \psi_p$\; and \;$w_p = T_p(\psi_p) = \psi_p$,  we have \;$T_p(\psi_p) \to \psi_{\infty}$\; locally uniformly in \;\;$\overline \Omega$, weakly in \;$W^{1,q}(\Omega)$\; for every \;$1<q<\infty$, and \;$\psi_{\infty}$\; is a infinity superharmonic.  Thus, recalling \eqref{inspace}, we have lemma \ref {eq:inftc} implies \;$T_{\infty} (\psi_{\infty}) = \psi_{\infty}$. Hence the proof of the lemma is complete.
\end{proof}
\vspace{10pt}

%\subsection{Existence of a Minimizer for \ref{eq:inftycase}}
Now, with all the ingredients at hand, we are ready to show that \;$\psi_{\infty}$\; is a minimizer of \;$J_{\infty}.$  Indeed, we are going to show the following proposition:
\begin{proposition}
Let \;$\Omega \subseteq \mathbb R^n$\; be a bounded and smooth domain, $F\in Lip (\partial \Omega)$\; and \;$z \in L^{\infty}(\Omega).$  Then \;$\psi_{\infty}$\; is a minimizer of \;$J_{\infty}$\; on \;$W^{1,\infty}_F(\Omega)$ That is:
\[
J_{\infty}(\psi_{\infty})= \min_{\eta\in W^{1,\infty}_F(\Omega)}  J_{\infty}(\eta) 
\]
\end{proposition}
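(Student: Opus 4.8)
The plan is to show that $J_\infty(\psi_\infty)\le\liminf_{p\to\infty}J_p(\psi_p)$ and that $\liminf_{p\to\infty}J_p(\psi_p)\le J_\infty(\eta)$ for every competitor $\eta\in W^{1,\infty}_F(\Omega)$; since $\psi_\infty\in W^{1,\infty}_F(\Omega)$ by \eqref{inspace}, these two inequalities give exactly the asserted minimality of $\psi_\infty$.

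\emph{Lower bound.} I would abbreviate $a_p:=||T_p(\psi_p)-z||_{L^p}$ and $b_p:=||D\psi_p||_{L^p}$, so that $J_p(\psi_p)=(a_p^p+b_p^p)^{1/p}\ge\max\{a_p,b_p\}$, and note that $a_p$ and $b_p$ are bounded uniformly in $p$ (as in \eqref{estimate of gradient of psi p}, together with $a_p^p\le J_p(\psi_p)^p\le J_p(\eta)^p$). Fix $1<q<\infty$. For $p>q$, the Hölder estimate \eqref{eq:holderonpsi_p} gives $||D\psi_p||_{L^q}\le b_p\,|\Omega|^{\frac1q-\frac1p}$, and, since $T_p(\psi_p)=\psi_p$ by \eqref{cspn}, also $||T_p(\psi_p)-z||_{L^q}=||\psi_p-z||_{L^q}\le a_p\,|\Omega|^{\frac1q-\frac1p}$. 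By \eqref{uniwe}, $\psi_p$ converges to $\psi_\infty$ weakly in $W^{1,q}(\Omega)$, hence $D\psi_p$ converges weakly to $D\psi_\infty$ and $\psi_p-z$ weakly to $\psi_\infty-z$ in $L^q(\Omega)$; weak lower semicontinuity of the $L^q$-norm then yields $||D\psi_\infty||_{L^q}\le|\Omega|^{1/q}\liminf_p b_p$ and $||\psi_\infty-z||_{L^q}\le|\Omega|^{1/q}\liminf_p a_p$. Letting $q\to\infty$ and using $||h||_{L^q}\to||h||_\infty$ for $h\in L^\infty(\Omega)$ (valid since $|\Omega|<\infty$) together with $|\Omega|^{1/q}\to1$, I obtain $||D\psi_\infty||_\infty\le\liminf_p b_p$ and, because $\psi_\infty=T_\infty(\psi_\infty)$ by Lemma \ref{lem:solnofTpconvergetoTinfty}, $||T_\infty(\psi_\infty)-z||_\infty\le\liminf_p a_p$. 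Consequently $\liminf_p J_p(\psi_p)\ge\max\{\liminf_p a_p,\liminf_p b_p\}\ge\max\{||T_\infty(\psi_\infty)-z||_\infty,||D\psi_\infty||_\infty\}=J_\infty(\psi_\infty)$.

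\emph{Upper bound.} Fix $\eta\in W^{1,\infty}_F(\Omega)\subset W^{1,p}_F(\Omega)$. Since $\psi_p$ minimizes $J_p$ over $W^{1,p}_F(\Omega)$ (see \eqref{eq:pfnal}), we have $J_p(\psi_p)\le J_p(\eta)$. Using $||h||_{L^p}\le|\Omega|^{1/p}||h||_\infty$ with $h=T_p(\eta)-z$ and $h=D\eta$, the bound $J_p(\eta)\le2^{1/p}\max\{||T_p(\eta)-z||_{L^p},||D\eta||_{L^p}\}$, and the uniform convergence $T_p(\eta)\to T_\infty(\eta)$ on $\overline{\Omega}$ (which gives $||T_p(\eta)-z||_\infty\to||T_\infty(\eta)-z||_\infty$), I get $\limsup_p J_p(\eta)\le\max\{||T_\infty(\eta)-z||_\infty,||D\eta||_\infty\}=J_\infty(\eta)$. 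Chaining the two steps, $J_\infty(\psi_\infty)\le\liminf_p J_p(\psi_p)\le\liminf_p J_p(\eta)\le\limsup_p J_p(\eta)\le J_\infty(\eta)$ for all $\eta\in W^{1,\infty}_F(\Omega)$, which is the claim.

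\emph{Main obstacle.} The delicate part is the lower bound. Weak compactness and lower semicontinuity are only available at the $L^q$-level, so the estimates must be carried through with a parameter $q$ and the limit $q\to\infty$ taken only at the end, exploiting the Hölder trick \eqref{eq:holderonpsi_p} and the $L^q$-characterization of $L^\infty$ (the same mechanism underlying Lemmas \ref{eq:liminfmax}--\ref{limofsumofpowersofp}). More fundamentally, one must know that the states $T_p(\psi_p)$ converge to $T_\infty(\psi_\infty)$, which is not automatic because uniqueness for the $\infty$-obstacle problem is open; this is precisely what Lemma \ref{lem:solnofTpconvergetoTinfty} provides, through the fixed-point characterization of Lemma \ref{eq:inftc} and the Rossi--Teixeira--Urbano description of $u_\infty$ in Lemma \ref{eq:infinfhar}.
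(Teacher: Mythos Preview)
Your argument is correct and follows essentially the same route as the paper: the H\"older trick \eqref{eq:holderonpsi_p} to pass from $L^p$ to $L^q$, weak lower semicontinuity at the $L^q$-level followed by $q\to\infty$, the uniform convergence $T_p(\eta)\to T_\infty(\eta)$, and the crucial input Lemma~\ref{lem:solnofTpconvergetoTinfty}. The only cosmetic difference is organizational: you factor the estimate through the intermediate quantity $\liminf_p J_p(\psi_p)$ (much as the paper does later in its proof of Theorem~\ref{eq:convminima}), whereas the paper's proof of this proposition introduces $H_p(\eta)=\max\{||T_p(\eta)-z||_\infty,||D\eta||_\infty\}$ and compares $||D\psi_p||_{L^q}$ and $||T_p(\psi_p)-z||_{L^q}$ directly to $2^{1/p}|\Omega|^{1/p}H_p(\eta)$ before passing to limits.
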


\begin{proof}
We first introduce for \;$n<p<\infty$\; and \;$\psi \in W^{1,p}_{F}(\Omega)$
\[
H_p(\psi) = \max\{||T_p(\psi) - z||_{\infty} , \; ||D \psi||_{\infty}\},
\]
which is well defined by Sobolev Embedding Theorem. Then for any \;$\eta  \in W^{1, \infty}_F(\Omega)$
\[
\int_{\Omega} |D \psi_p|^p dx  \leq  J_p^p(\eta) = \int_{\Omega} \left(|T_p(\eta) - z|^p + |D \eta|^p \right)dx.
\]
%Since  $T_p(\eta)$ is in $W^{1,p}$ we know that for sufficiently large $p$ there exist $\gamma,$  $0< \gamma < 1$ such that $T_p(\eta)$ is in $C^{0, \gamma}(\Omega).$ 
 Therefore, using the trivial identity \;$(|a|^p+|b|^p)^{\frac{1}{p}}\leq 2^{\frac{1}{p}}\max\{|a|,\;|b|\}$, we get
\[
\left( \int_{\Omega} |D \psi_p|^p dx \right)^{1/p} \leq 2^{1/p} |\Omega|^{1/p} H_p(\eta).
\]
If we now set 
\begin{equation}\label{eq:defnofip}
I_p = \inf_{\eta \in W^{1, \infty}_F(\Omega)} H_p(\eta),
\end{equation}
we deduce that 
\[
\left( \int_{\Omega} |D \psi_p|^p dx \right)^{1/p} \leq 2^{1/p} |\Omega|^{1/p} I_p.
\]
Let us fix \;$q$\; such that \;$n<q<\infty$. Then for \;$q<p<\infty$,\; by proceeding as in \eqref{eq:holderonpsi_p}, we obtain
\begin{equation*}
||D \psi_p||_{L^q}  \leq  2^{1/p} I_p |\Omega|^{1/q}.
\end{equation*}
Similarly, 
\[
||T_p(\psi_p) - z||_{L^q} \leq 2^{1/p} I_p |\Omega|^{1/q}.
\]
Thus
\begin{equation}\label{eq: boundofqnormofpsip}
\max \{||T_p(\psi_p) - z||_{L^q}, ||D \psi_p||_{L^q}\} \leq 2^{1/p} I_p |\Omega|^{1/q}.
\end{equation}
For any \;$\eta \in W^{1, \infty}_F(\Omega)$\; we also have \;$I_p  \leq H_p(\eta)$\; and \;$\liminf_{p \to \infty} I_p  \leq \liminf_{p \to \infty}  H_p(\eta).$  Thus, since \;$\psi_p$\; converges weakly in \;$W^{1,q}(\Omega)$\; to \;$\psi_{\infty}$\; as \;$p\to\infty$\; and \eqref{eq: boundofqnormofpsip} holds, then by weak lower semicontinuity, we conclude that
\[
||D\psi_{\infty}||_{L^q} \leq \liminf_{p \to \infty}  ||D\psi_p||_{L^q} \leq |\Omega|^{1/q} \liminf_{p \to \infty}  H_p(\eta).
\] 
Moreover, since \;$T_p(\eta)$\; converges locally uniformly on \;$\overline \Omega$\; to \;$T_{\infty}(\eta)$\; as \;$p\to\infty$\; and \;$\overline\Omega$\; is compact, then clearly
\[
\lim_{p \to \infty}  H_p(\eta) = J_{\infty}(\eta),%max\{|T_{\infty}(\eta) - z|_{\infty} ,  |D \eta|_{\infty}\}
\]
and hence

\[
||D\psi_{\infty}||_{L^q} \leq J_{\infty}(\eta) |\Omega|^{1/q}.
\]
Since this holds for any element \;$\eta$\; of \;$W^{1, \infty}_F(\Omega)$,\; we conclude that by taking the infimum over \;$W^{1, \infty}_F(\Omega)$\; and letting \;$q \to \infty$
\begin{equation}\label{gradpsilessthaninfjinfty}
||D\psi_{\infty}||_{\infty} \leq \inf_{\eta \in W^{1, \infty}_F(\Omega)} J_{\infty}(\eta) \leq J_{\infty}(\psi_{\infty}).
\end{equation}
Using lemma \ref{lem:solnofTpconvergetoTinfty} and equation \eqref{eq: boundofqnormofpsip} combined with Rellich compactness Theorem or the continuous embedding of \;$L^{\infty}$\;into \;$L^q$, we conclude that
\[
||T_{\infty}(\psi_{\infty}) - z||_{L^q} = \lim_{p \to \infty}  ||T_p(\psi_p) - z||_{L^q} \leq |\Omega|^{1/q} \liminf_{p \to \infty}  H_p(\eta).
\]
Thus, as above letting \;$q$\; goes to infinity and taking infimum in \;$\eta$\; over \;$ W^{1, \infty}_F(\Omega)$,   we also have
\begin{equation}\label{Tinftypsilessthaninfjinfty}
||T_{\infty}(\psi_{\infty}) - z||_{\infty} \leq \inf_{\eta \in W^{1, \infty}_F(\Omega)} J_{\infty}(\eta) \leq J_{\infty}(\psi_{\infty}).
\end{equation}
Finally, from \eqref{inspace}, \eqref{gradpsilessthaninfjinfty} and \eqref{Tinftypsilessthaninfjinfty} we deduce
\[
J_{\infty}(\psi_{\infty}) = \min_{\eta \in W^{1, \infty}_F(\Omega)} J_{\infty}(\eta),
\]
as desired.
\end{proof}
\subsection{Convergence of Minimum Values}
In this subsection, we show the convergence of the minimal value of the  optimal control problem of \;$J_p$ \; to the one of \;$J_{\infty}$\; as \;$p\to \infty$, namely Theorem \ref{eq:convminima} via the following proposition:
\begin{proposition}
Let \;$\Omega \subset \mathbb R^n$ be a bounded and smooth domain, $F\in Lip (\partial \Omega)$\; and \;$1<p<\infty.$ Then recalling that 
\begin{equation*}
C_p= \min _{\psi \in  W^{1,p}_F(\Omega)}J_p(\psi)\; \;\;\text{and}\;\;\;\;C_{\infty}=\min _{\psi \in  W^{1, \infty}_F(\Omega)}J_{\infty}(\psi)  ,
\end{equation*}
we have 
\[
\lim_{p \to \infty} C_p= C_{\infty}.
\]
\end{proposition}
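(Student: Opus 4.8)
The plan is to prove separately that $\limsup_{p\to\infty}C_p\le C_\infty$ and that $\liminf_{p\to\infty}C_p\ge C_\infty$, recycling the constructions of Subsection~\ref{limitofpproblem} together with Lemmas~\ref{eq:liminfmax} and \ref{limofsumofpowersofp}. For the upper bound, fix an arbitrary $\eta\in W^{1,\infty}_F(\Omega)$. Since $W^{1,\infty}_F(\Omega)\subset W^{1,p}_F(\Omega)$, the function $\eta$ is admissible in the problem defining $C_p$, and the elementary inequality $(|a|^p+|b|^p)^{1/p}\le 2^{1/p}\max\{|a|,|b|\}$ gives
\[
C_p\;\le\;J_p(\eta)\;\le\;2^{1/p}\,|\Omega|^{1/p}\,\max\{\|T_p(\eta)-z\|_\infty,\;\|D\eta\|_\infty\}.
\]
As $p\to\infty$ one has $T_p(\eta)\to T_\infty(\eta)$ locally uniformly on the compact set $\overline\Omega$, hence $\|T_p(\eta)-z\|_\infty\to\|T_\infty(\eta)-z\|_\infty$, while $\|D\eta\|_\infty$ is fixed and $2^{1/p}|\Omega|^{1/p}\to1$; thus, by Lemma~\ref{eq:liminfmax} (or simply continuity of $\max$), $\limsup_{p\to\infty}C_p\le J_\infty(\eta)$, and taking the infimum over $\eta$ yields $\limsup_{p\to\infty}C_p\le C_\infty$.

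For the lower bound, let $\psi_p$ be the minimizer of $J_p$ of Subsection~\ref{limitofpproblem}, so that $J_p(\psi_p)=C_p$ and $T_p(\psi_p)=\psi_p$ by \eqref{cspn}, and let $\psi_\infty$ be the weak limit in \eqref{uniwe}; for the full-sequence statement one first restricts to a subsequence realizing $\liminf_{p\to\infty}C_p$ and then applies \eqref{uniwe} along it. From \eqref{def} and $T_p(\psi_p)=\psi_p$ we may write $C_p=\bigl(\|\psi_p-z\|_{L^p}^p+\|D\psi_p\|_{L^p}^p\bigr)^{1/p}$, so Lemma~\ref{limofsumofpowersofp} applied with $a_p=\|\psi_p-z\|_{L^p}$ and $b_p=\|D\psi_p\|_{L^p}$ gives $\liminf_{p\to\infty}C_p=\max\{\liminf_p a_p,\;\liminf_p b_p\}$. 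Freezing $q$ with $n<q<p$ and using H\"older's inequality as in \eqref{eq:holderonpsi_p} gives $\|\psi_p-z\|_{L^q}\le|\Omega|^{\frac1q-\frac1p}a_p$ and $\|D\psi_p\|_{L^q}\le|\Omega|^{\frac1q-\frac1p}b_p$; letting $p\to\infty$ — using $\psi_p\to\psi_\infty$ locally uniformly on $\overline\Omega$ (whence in $L^q$) for the first term, and $\psi_p\rightharpoonup\psi_\infty$ in $W^{1,q}(\Omega)$ together with weak lower semicontinuity of the $L^q$-norm for the second — and then sending $q\to\infty$ shows $\liminf_p a_p\ge\|\psi_\infty-z\|_\infty$ and $\liminf_p b_p\ge\|D\psi_\infty\|_\infty$. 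Since $T_\infty(\psi_\infty)=\psi_\infty$ by Lemma~\ref{lem:solnofTpconvergetoTinfty} and $\psi_\infty\in W^{1,\infty}_F(\Omega)$ by \eqref{inspace}, combining these yields
\[
\liminf_{p\to\infty}C_p\;\ge\;\max\{\|T_\infty(\psi_\infty)-z\|_\infty,\;\|D\psi_\infty\|_\infty\}\;=\;J_\infty(\psi_\infty)\;\ge\;C_\infty,
\]
and together with the upper bound this gives $\lim_{p\to\infty}C_p=C_\infty$. (Alternatively, $J_\infty(\psi_\infty)=C_\infty$ is exactly the content of the preceding proposition, so the last inequality is immediate.)

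The step I expect to be the main obstacle, exactly as in the proof of Theorem~\ref{eq:main}, is the bookkeeping forced by the exponent $p$ playing a double role — the order of the $L^p$-norm and the regularity index of the competitor: the passage to the $L^\infty$-regime cannot be performed in one stroke but must be routed through a fixed large finite $q$, via H\"older's inequality and the $L^q$-characterization of $\|\cdot\|_\infty$, before letting $q\to\infty$. A secondary technical nuisance is that the compactness in \eqref{uniwe} holds only along a subsequence and must be threaded carefully through the $\liminf$; and, as in Theorem~\ref{eq:main}, the open uniqueness question for the $\infty$-obstacle problem is sidestepped by invoking the fixed-point characterization of $T_\infty$ (Lemmas~\ref{eq:infinfhar}--\ref{eq:inftc}) rather than any uniqueness statement.
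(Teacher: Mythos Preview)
Your proof is correct and follows essentially the same approach as the paper: both obtain the upper bound by testing $J_p$ against a $W^{1,\infty}_F$ competitor and the lower bound by routing through fixed $L^q$-norms via H\"older's inequality, weak lower semicontinuity, Lemmas~\ref{eq:liminfmax}--\ref{limofsumofpowersofp}, and the identification $T_p(\psi_p)=\psi_p$, $T_\infty(\psi_\infty)=\psi_\infty$ from Lemma~\ref{lem:solnofTpconvergetoTinfty}, with the subsequence bookkeeping resolved at the end. The only difference is organizational---you invoke Lemma~\ref{limofsumofpowersofp} first and then carry out the $L^q$-to-$L^\infty$ passage term by term, whereas the paper performs the $L^q$ step first and applies Lemma~\ref{limofsumofpowersofp} afterwards---but the ingredients and logic are identical.
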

\begin{proof}
Let \;$\psi_p \in W^{1,p}_F(\Omega)$\; and \;$\psi_{\infty} \in W^{1,\infty}_F(\Omega)$\; be as in subsection \ref{limitofpproblem}. Then they satisfy \;$J_p(\psi_p) = C_p$\; and \;$J_{\infty}(\psi_{\infty}) = C_{\infty}$. Moreover, up to a subsequence, we have \;$\psi_p$\; and \;$\psi_{\infty}$\; verify \eqref{uniwe} and the conclusions of lemma \ref{lem:solnofTpconvergetoTinfty}. On the other hand, by minimality and H\"older's inequality, we have 
\begin{equation*}
J_p(\psi_p) \leq J_p(\psi_{\infty}) \leq 2^{1/p} |\Omega|^{1/p} \max \{||T_p(\psi_{\infty})-z||_{\infty}, ||D\psi_{\infty}||_{\infty} \}.
\end{equation*}
Thus
\begin{equation}\label{eq:limsup}
\limsup_{p \to \infty} J_p(\psi_p) \leq J_{\infty}(u_{\infty}).
\end{equation}
Now we are going to show the following 
\begin{equation}\label{minofinftyobstacleisless}
J_{\infty} (\psi_{\infty})  \leq \liminf_{p \to \infty} J_p(\psi_p).
\end{equation}
%\ref{j-inftylessthanliminfj-p} 
To that end observe that by definition of \;$J_{\infty}$, we have 
\begin{equation}\label{eq:apdef}
J_{\infty}(\psi_{\infty}) =\max \{ ||T_{\infty} (\psi_{\infty}) - z||_{\infty}, \, ||D\psi_{\infty}||_{\infty}\}.
\end{equation}
Thus, using the \;$L^q$-characterization of \;$L^{\infty}$, we have that \eqref{eq:apdef} imply
\begin{equation}\label{eq:aplq}
J_{\infty}(\psi_{\infty}) = \max \{\lim_{q \to \infty} ||T_{\infty} (\psi_{\infty}) - z||_{L^q}, \, \lim_{q \to \infty} ||D\psi_{\infty}||_{L^q}\},
\end{equation}
and by using lemma \ref{eq:liminfmax}, we get
\begin{equation}\label{eq:aplqs}
J_{\infty}(\psi_{\infty}) = \lim_{q \to \infty}\max \{ ||T_{\infty} (\psi_{\infty}) - z||_{L^q}, \,||D\psi_{\infty}||_{L^q}\}.
\end{equation}
On the other hand, by weak lower semicontinuity, and corollary \ref{lem:solnofTpconvergetoTinfty}, we have
\begin{equation}\label{eq:aplowsem}\
\,||D\psi_{\infty}||_{L^q}\leq \liminf_{p\rightarrow \infty} \,||D\psi_{p}||_{L^q}.
\end{equation}
Now, combining \eqref{eq:aplqs} and \eqref{eq:aplowsem}, we obtain
\begin{equation}\label{eq:aplqsn}
J_{\infty}(\psi_{\infty}) \leq \liminf_{q \to \infty}\max \{ ||T_{\infty} (\psi_{\infty}) - z||_{L^q},  \liminf_{p\rightarrow \infty} \,||D\psi_p||_{L^q}\}.
\end{equation}
Next, using lemma \ref{limofsumofpowersofp}, corollary \ref{lem:solnofTpconvergetoTinfty}, and \eqref{eq:aplqsn}, we get 
\begin{equation}\label{eq:aplqsnn}
J_{\infty}(\psi_{\infty}) \leq \liminf_{q \to \infty}\liminf_{p \to \infty} \left\{ (||T_{p} (\psi_{p}) - z||_{L^q})^p  +  (||D\psi_{p}||_{L^q})^p\right\}^{1/p}.
\end{equation}
To continue, we are going to estimate the right hand side of \eqref{eq:aplqsnn}. Indeed, using H\"older's inequality, we have
\begin{align*}
(||T_{p} (\psi_{p}) - z||_{L^q})^p  &= \left\{\int_{\Omega} |T_{p} (\psi_{p}) - z|^q dx \right\}^{p/q}\\
&\leq \left\{\int_{\Omega} |T_{p} (\psi_{p}) - z|^p dx \right\}  |\Omega| ^{(1-q/p)p/q} \\
&= \left\{\int_{\Omega} |T_{p} (\psi_{p}) - z|^p dx \right\}  |\Omega| ^{(1-q/p)p/q}.\\
\end{align*}
Similarly,  we obtain \;$$(||D\psi_{p}||_{L^q})^p \leq  \left\{ \int_{\Omega}  |D\psi_{p}|^p \, dx \right\}  |\Omega| ^{(1-q/p)p/q}.$$ By using the latter two estimates in \eqref{eq:aplqsnn}, we get
\begin{align*}
J_{\infty}(\psi_{\infty}) &\leq \liminf_{q \to \infty}\liminf_{p \to \infty} \left[\left\{ \int_{\Omega}( |T_{p} (\psi_{p}) - z|^p  +  |D\psi_{p}|^p )\, dx \right\}^{1/p} |\Omega| ^{(1-q/p)p/q(1/p)}\right]\\
&= \liminf_{q \to \infty}\liminf_{p \to \infty} \left[\left\{ \int_{\Omega} (|T_{p} (\psi_{p}) - z|^p  +  |D\psi_{p}|^p )\, dx \right\}^{1/p} |\Omega| ^{\frac{1}{q} - \frac{1}{p} }\right]\\
& = \liminf_{q \to \infty}\left[|\Omega| ^{\frac{1}{q}  }\liminf_{p \to \infty} J_p(\psi_p)\right] =\liminf_{p \to \infty} J_p(\psi_p) \numberthis 
\label{j-inftylessthanliminfj-p}
\end{align*}
proving claim \eqref{minofinftyobstacleisless}.
Combining \eqref{eq:limsup} with \eqref{j-inftylessthanliminfj-p} we obtain  
\[
\lim_{p \to \infty} J_p(\psi_p)=J_{\infty}(u_{\infty}),
\]
and recalling that we were working with a possible subsequence, then we have that up to a subsequence
\[
\lim_{p \to \infty} C_p = C_{\infty}.
\]
Hence, since the limit is independent of the subsequence, we have  
\[
\lim_{p \to \infty} C_p= C_{\infty}
\]
as required.
\end{proof}
%\begin{remark}
%\end{remark}
%\newpage
%\begin{thebibliography}{9}
\begin{bibdiv}
\begin{biblist}

\bib{AL02}{article}{
   author={Adams, David R.},
   author={Lenhart, Suzanne},
   title={Optimal control of the obstacle for a parabolic variational
   inequality},
   journal={J. Math. Anal. Appl.},
   volume={268},
   date={2002},
   number={2},
   pages={602--614}}

\bib{AL03}{article}{
   author={Adams, David R.},
   author={Lenhart, Suzanne},
   title={An obstacle control problem with a source term},
   journal={Appl. Math. Optim.},
   volume={47},
   date={2003},
   number={1},
   pages={79--95}}
   
\bib{ALY98}{article}{
   author={Adams, D. R.},
   author={Lenhart, S. M.},
   author={Yong, J.},
   title={Optimal control of the obstacle for an elliptic variational
   inequality},
   journal={Appl. Math. Optim.},
   volume={38},
   date={1998},
   number={2},
   pages={121--140}}
   
\bib{MR1941913}{article}{
   author={Adams, David R.},
   author={Lenhart, Suzanne},
   title={An obstacle control problem with a source term},
   journal={Appl. Math. Optim.},
   volume={47},
   date={2003},
   number={1},
   pages={79--95},
   issn={0095-4616}}

\bib{AHL10}{article}{
   author={Adams, David R.},
   author={Hrynkiv, Volodymyr},
   author={Lenhart, Suzanne},
   title={Optimal control of a biharmonic obstacle problem},
   conference={
      title={Around the research of Vladimir Maz'ya. III},
   },
   book={
      series={Int. Math. Ser. (N. Y.)},
      volume={13},
      publisher={Springer, New York},
   },
   date={2010},
   pages={1--24}}
        
\bib{ALS15}{article}{
   author={Andersson, John},
   author={Lindgren, Erik},
   author={Shahgholian, Henrik},
   title={Optimal regularity for the obstacle problem for the $p$-Laplacian},
   journal={J. Differential Equations},
   volume={259},
   date={2015},
   number={6},
   pages={2167--2179}}
 
\bib{GA65}{article}{
   author={Aronsson, Gunnar},
   title={Minimization problems for the functional ${\rm
   sup}\sb{x}\,F(x,\,f(x),\,f\sp{\prime} (x))$},
   journal={Ark. Mat.},
   volume={6},
   date={1965},
   pages={33--53 (1965)}}

\bib{ACJ04}{article}{
   author={Aronsson, Gunnar},
   author={Crandall, Michael G.},
   author={Juutinen, Petri},
   title={A tour of the theory of absolutely minimizing functions},
   journal={Bull. Amer. Math. Soc. (N.S.)},
   volume={41},
   date={2004},
   number={4},
   pages={439--505}}

\bib{BL04}{article}{
   author={Bergounioux, Ma\"{\i}tine},
   author={Lenhart, Suzanne},
   title={Optimal control of bilateral obstacle problems},
   journal={SIAM J. Control Optim.},
   volume={43},
   date={2004},
   number={1},
   pages={240--255}}

\bib{QC00}{article}{
   author={Chen, Qihong},
   title={Optimal control of semilinear elliptic variational bilateral
   problem},
   journal={Acta Math. Sin. (Engl. Ser.)},
   volume={16},
   date={2000},
   number={1},
   pages={123--140}}
   
\bib{CY05}{article}{
   author={Chen, Qihong},
   author={Ye, Yuquan},
   title={Bilateral obstacle optimal control for a quasilinear elliptic
   variational inequality},
   journal={Numer. Funct. Anal. Optim.},
   volume={26},
   date={2005},
   number={3},
   pages={303--320}}
   
\bib{CCT05}{article}{
   author={Chen, Qihong},
   author={Chu, Delin},
   author={Tan, Roger C. E.},
   title={Optimal control of obstacle for quasi-linear elliptic variational
   bilateral problems},
   journal={SIAM J. Control Optim.},
   volume={44},
   date={2005},
   number={3},
   pages={1067--1080}}
      
\bib{DM15}{article}{
   author={Di Donato, Daniela},
   author={Mugnai, Dimitri},
   title={On a highly nonlinear self-obstacle optimal control problem},
   journal={Appl. Math. Optim.},
   volume={72},
   date={2015},
   number={2},
   pages={261--290}}
   
\bib{GN17}{article}{
   author={Ghanem, Radouen},
   author={Nouri, Ibtissam},
   title={Optimal control of high-order elliptic obstacle problem},
   journal={Appl. Math. Optim.},
   volume={76},
   date={2017},
   number={3},
   pages={465--500}}

 \bib{JJ12}{article}{
   author={Julin, Vesa},
   author={Juutinen, Petri},
   title={A new proof for the equivalence of weak and viscosity solutions
   for the $p$-Laplace equation},
   journal={Comm. Partial Differential Equations},
   volume={37},
   date={2012},
   number={5},
   pages={934--946}}  
 \bib{L71}{book}{
   author={Lions, J. L.},
   title={Optimal Control of Systems Governed by Partial Differential Equations, 1st ed.},
   journal={Springer},
   volume={170},
   date={1971},
   number={Springer}
   }

\bib{LH02}{article}{
   author={Lou, Hongwei},
   title={An optimal control problem governed by quasi-linear variational
   inequalities},
   journal={SIAM J. Control Optim.},
   volume={41},
   date={2002},
   number={4},
   pages={1229--1253}}

\bib{L01}{article}{
   author={Lou, Hongwei},
   title={On the regularity of an obstacle control problem},
   journal={J. Math. Anal. Appl.},
   volume={258},
   date={2001},
   number={1},
   pages={32--51}}
   
\bib{L15}{article}{
   author={ Lindqvist, P.},
   title={Notes on the Infinity Laplace Equation},
   journal={.},
   volume={17},
   date={2015}}

\bib{RTU15}{article}{
   author={Rossi, J. D.},
   author={Teixeira, E. V.},
   author={Urbano, J. M.},
   title={Optimal regularity at the free boundary for the infinity obstacle
   problem},
   journal={Interfaces Free Bound.},
   volume={17},
   date={2015},
   number={3},
   pages={381--398}} 

\bib{SM12}{article}{
   author={Str\"{o}mqvist, Martin H.},
   title={Optimal control of the obstacle problem in a perforated domain},
   journal={Appl. Math. Optim.},
   volume={66},
   date={2012},
   number={2},
   pages={239--255}}
      
\end{biblist}
\end{bibdiv}
%\end{thebibliography}
\end{document}